\newtheorem{Theo}{Theorem}[section]
\newtheorem{Lem}[Theo]{Lemma}
\newtheorem{Prop}[Theo]{Proposition}
\newtheorem{Cor}[Theo]{Corollary}
\theoremstyle{definition}
\newtheorem{Def}[Theo]{Definition}
\newtheorem{Rem}[Theo]{Remark}
\newtheorem{Ex}[Theo]{Example}
\numberwithin{equation}{section}
\date{}
\title{}
\begin{document}

\vspace*{3cm}

\begin{center}
\baselineskip=25pt {\Large \textbf{JAMISON SEQUENCES IN COUNTABLY INFINITE DISCRETE ABELIAN GROUPS}}
\end{center}

\vspace*{1cm}

\begin{center}
\large \textit{Vincent Devinck}
\end{center}

\vspace*{1cm}

\begin{abstract}
\noindent We extend the definition of Jamison sequences in the context of topological abelian groups. Then we study such sequences when the abelian group is discrete and countably infinite. An arithmetical characterization of such sequences is obtained, extending the result of Badea and Grivaux \cite{BadeaGrivaux2} about Jamison sequences of integers. In particular, we prove that the sequence consisting of all elements of the group is a Jamison sequence. In the opposite, a sequence which generates a subgroup of infinite index in the group is never a Jamison sequence.
\end{abstract}

\vspace*{1cm}

\noindent Mathematics Subject Classification (2010) codes: 47A10, 37C85, 43A40\\
\noindent \textit{Keywords}: Unimodular point spectrum, Jamison sequences, discrete abelian groups, characters and dual group

\vspace*{1cm}

\section{Introduction}

To begin, let us recall the original definition of Jamison sequences \cite{BadeaGrivaux1} in the context of bounded linear operators (and $C_0$-semigroups) and some important results around these sequences.

\subsection{Integer Jamison sequences, Jamison sequences for semigroups}

The subject of Jamison sequences has been first studied for sequences of integers by Jamison \cite{Jamison}, Ransford \cite{Ransford}, Ransford and Roginskaya \cite{RansfordRoginskaya} and Badea and Grivaux \cite{BadeaGrivaux1} where the authors studied the relationship between the growth of the sequence $(\Vert T^n\Vert)_{n\geqslant 0}$ of the norms of the iterates of a bounded linear operator $T$ acting on a separable Banach space $X$ and the size of the \textit{unimodular point spectrum} $\sigma_p(T)\cap \mathbb{T}$ of $T$ where $\mathbb{T}$ is the closed unit circle of the complex plane and
$$
\sigma_p(T)\cap \mathbb{T}:=\big\{\lambda \in \mathbb{T}\,\big|\,\mathrm{Ker}(T-\lambda\, \mathrm{Id}_X)\ne\{0\}\big\}
$$ 
is the set of eigenvalues of modulus $1$ (or \textit{unimodular eigenvalues}) of $T$. 

The first important result in this way is due to Jamison \cite{Jamison} in 1965. He proved that the unimodular point spectrum $\sigma_p(T)\cap \mathbb{T}$ of $T$ is at most countable when $T$ is a bounded linear operator acting on a separable space $X$ which is \textit{power-bounded}, that is $\sup_{n\geqslant 0}\Vert T^n\Vert<+\infty$. In the same spirit of Jamison's result, Nikolskii \cite{Nikolskii} proved that if $T$ acts on a separable Hilbert space and $\sigma_p(T)\cap \mathbb{T}$ has positive Lebesgue measure, then the series $\sum_{n\geqslant 0}^{}\Vert T^n\Vert^{-2}$ is convergent.

It was then a natural problem to investigate the influence of \textit{partial power-boundedness} on the size of the unimodular point spectrum of an operator. Ransford \cite{Ransford} and Ransford and Roginskaya \cite{RansfordRoginskaya} proved in particular that partial power-boundedness of an operator does not necessarily imply countability of the unimodular point spectrum. In this way, Badea and Grivaux give in \cite{BadeaGrivaux1} the following definition. 

\begin{Def}[\cite{BadeaGrivaux1}, Definition 1.2]
A sequence of positive integers $(n_k)_{k\geqslant 0}$ is a Jamison sequence if for every infinite-dimensional separable complex Banach space $X$ and every bounded linear operator $T$ on $X$ which is partially power-bounded with respect to the sequence $(n_k)_{k\geqslant 0}$, that is $\sup_{k\geqslant 0}\Vert T^{n_k}\Vert<+\infty$, the unimodular point spectrum $\sigma_p(T)\cap \mathbb{T}$ of $T$ is at most countable.
\end{Def}

It was first notice that the fact that a sequence $(n_k)_{k\geqslant 0}$ is a Jamison sequence or not depends in particular on the growth of the sequence. For instance it is proved in \cite{BadeaGrivaux1} that $(n_k)_{k\geqslant 0}$ is a Jamison sequence as soon as $\sup_{k\geqslant 0}\frac{n_{k+1}}{n_k}<+\infty$ whereas Ransford and Roginskaya proved that the sequence $(2^{2^k})_{k\geqslant 0}$ fails to be a Jamison sequence. For more results around the influence of the growth of the sequences on Jamison sequences, see \cite{Ransford}, \cite{RansfordRoginskaya} and \cite{BadeaGrivaux1}.

In \cite{BadeaGrivaux2}, Badea and Grivaux give an arithmetical characterization of Jamison sequences. Under the (non-restrictive) condition $n_0=1$, one can define a distance $d_{(n_k)}$ on $\mathbb{T}$ by setting
$$
\forall (\lambda,\mu)\in \mathbb{T}^2,\qquad d_{(n_k)}(\lambda,\mu)=\sup_{k\geqslant 0}|\lambda^{n_k}-\mu^{n_k}|
$$
Note that $d_{(n_k)}(\lambda,\mu)$ is the uniform norm along the sequence $(n_k)_{k\geqslant 0}$ of $\chi_\lambda-\chi_\mu$ where $\chi_\lambda : n \longmapsto \lambda^n$ and $\chi_\mu : n \longmapsto \mu^n$ are two (continuous) group homomorphisms of $\mathbb{Z}$. The characterisation of Jamison sequences runs as follows.

\begin{Theo}[\cite{BadeaGrivaux2}, Theorem 2.1]\label{Zjamison}
Let $(n_k)_{k\geqslant 0}$ be an increasing sequence of positive integers with $n_0=1$. The following assertions are equivalent$:$
\begin{enumerate}
\item[$(1)$] $(n_k)_{k\geqslant 0}$ is a Jamison sequence$;$
\item[$(2)$] there exists a positive real number $\varepsilon$ such that for every $\lambda\in \mathbb{T}\setminus\{1\}$,
\begin{equation}\label{Zcharacterization}
\sup_{k\geqslant 0}\vert \lambda^{n_k}-1\vert\geqslant \varepsilon
\end{equation}
\end{enumerate}
\end{Theo}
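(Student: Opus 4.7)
The plan is to prove the equivalence $(1) \Leftrightarrow (2)$ by handling the two implications separately, the arithmetic-to-Jamison direction being a short separation argument and its converse requiring an explicit construction of an operator with uncountable unimodular point spectrum.

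For $(2) \Rightarrow (1)$, the natural strategy is to show that the normalized eigenvectors of any partially power-bounded operator must be uniformly separated in $X$. So, assume $M := \sup_{k\geqslant 0}\Vert T^{n_k}\Vert < +\infty$ and let $\lambda \ne \mu$ be unimodular eigenvalues with unit eigenvectors $x_\lambda, x_\mu$. Setting $z := x_\lambda - x_\mu$, the identity
\[
T^{n_k} z - \lambda^{n_k} z \;=\; (\lambda^{n_k} - \mu^{n_k})\, x_\mu
\]
together with $|\lambda^{n_k}|=1$ and $\Vert x_\mu\Vert = 1$ gives $|\lambda^{n_k} - \mu^{n_k}| \leqslant (M+1)\Vert x_\lambda - x_\mu\Vert$ for every $k$. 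Since $|\mu|=1$, one has $|\lambda^{n_k} - \mu^{n_k}| = |(\lambda\bar\mu)^{n_k} - 1|$ with $\lambda\bar\mu \in \mathbb{T}\setminus\{1\}$, so hypothesis $(2)$ yields $\Vert x_\lambda - x_\mu\Vert \geqslant \varepsilon/(M+1)$. Thus all normalized unimodular eigenvectors of $T$ form a uniformly discrete subset of the separable space $X$, forcing $\sigma_p(T)\cap \mathbb{T}$ to be at most countable.

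For $(1) \Rightarrow (2)$ I would argue by contrapositive. Suppose $(2)$ fails, so we can pick a sequence $(\lambda_m) \subset \mathbb{T}\setminus\{1\}$ with $\varepsilon_m := \sup_{k\geqslant 0}|\lambda_m^{n_k} - 1| \to 0$. The aim is to build a separable infinite-dimensional Banach space $X$ and a bounded operator $T$ on $X$ with $\sup_{k\geqslant 0}\Vert T^{n_k}\Vert < +\infty$ whose unimodular point spectrum is uncountable. A natural route is to work in a weighted $\ell^2$ or $c_0$ model, take as starting eigenvectors a basis-like family $(e_m)$ associated to the $\lambda_m$'s, and define $T$ diagonally (or as a rank-one perturbation of the identity of the form $I + x \otimes \varphi$) so that $T^{n_k} - I$ acts on each $e_m$ with norm controlled by $\varepsilon_m$. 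Choosing the weights so that the $e_m$'s span a space containing a Cauchy family indexed by a continuum of parameters, one extracts by completion eigenvectors for an uncountable subset of $\mathbb{T}$, while the bound on $\Vert T^{n_k}\Vert$ follows term-by-term from the smallness of the $\varepsilon_m$.

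The main obstacle is exactly this converse: one must simultaneously arrange uncountably many unimodular eigenvalues and control $\Vert T^{n_k}\Vert$ along the sparse subsequence $(n_k)$ only—without bounding the full orbit $\Vert T^n\Vert$, since by Jamison's original theorem that would immediately give a countable spectrum. The balance is delicate and, in my plan, rests on how finely the collapse $\varepsilon_m \to 0$ can be translated into a genuine operator-theoretic perturbation whose spectrum thickens from a sequence into a continuum; the key technical step will be showing that the corresponding Cauchy families of formal eigenvectors actually converge in $X$ to nontrivial eigenvectors for limits of the $\lambda_m$'s (or parameters constructed from them), thereby exhibiting an uncountable $\sigma_p(T)\cap \mathbb{T}$.
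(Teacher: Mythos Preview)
Your argument for $(2)\Rightarrow(1)$ is correct and is exactly the separation argument the paper gives (Proposition~\ref{conditionsuffisante} specialised to $G=\mathbb{Z}$).

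The gap is in $(1)\Rightarrow(2)$. Your two concrete suggestions---a diagonal operator on a weighted sequence space, or a rank-one perturbation $I+x\otimes\varphi$---cannot work as stated: a diagonal operator has exactly the countable set $\{\lambda_m\}$ as its point spectrum, and a rank-one perturbation of the identity has at most two eigenvalues. The sentence about ``Cauchy families of formal eigenvectors converging to nontrivial eigenvectors for limits of the $\lambda_m$'s'' is the crux, but limits of the $\lambda_m$ in $\mathbb{T}$ still form a countable set together with at most one accumulation point, so the continuum has to come from somewhere else. You have correctly identified the obstacle but not a mechanism to overcome it.

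The construction the paper describes (it is the Badea--Grivaux construction, reproduced in the proof of Theorem~\ref{characterization}) has three ingredients you are missing. First, from the failure of $(2)$ one builds, by a dyadic Cantor-tree argument using products $\chi_1^{\pm1}\chi_2^{\pm1}\cdots$, an \emph{uncountable} set $K\subset\mathbb{T}$ which is \emph{separable} for the metric $d_{(n_k)}(\lambda,\mu)=\sup_k|\lambda^{n_k}-\mu^{n_k}|$; this is how a sequence becomes a continuum. Second, the operator is not diagonal but the bilateral shift on a weighted space $\ell^2(\mathbb{Z},w)$, for which \emph{every} $\lambda\in\mathbb{T}$ is already an eigenvalue with eigenvector $(\lambda^n)_{n\in\mathbb{Z}}$. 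Third---and this is the step that replaces your hoped-for ``term-by-term control''---partial power-boundedness is forced by a renorming
\[
\Vert f\Vert_\ast=\max\Bigl(\Vert f\Vert,\ \sup_{j\geqslant 0}2^{-j-1}\sup_{k_0,\dots,k_j}\Bigl\Vert\prod_{i=0}^{j}(T^{n_{k_i}}-\mathrm{Id})f\Bigr\Vert\Bigr),
\]
which automatically gives $\Vert T^{n_k}\Vert_\ast\leqslant 3$. One then restricts to the closed span of the eigenvectors for $\lambda\in K$; separability of $(K,d_{(n_k)})$ transfers to separability of this span via a continuity estimate for the eigenvector field $\lambda\mapsto(\lambda^n)_n$. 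None of these three steps is visible in your outline.
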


The hard part of the proof of Theorem \ref{Zjamison} is the following: when condition $(2)$ is not satisfied, Badea and Grivaux had to construct an explicit separable Banach space $X$ and a bounded linear operator $T$ on $X$ which is partially power-bounded with respect to the sequence $(n_k)_{k\geqslant 0}$ and such that the set $\sigma_p(T)\cap \mathbb{T}$ is uncountable. This method of construction will be used in Theorem \ref{characterization}.

The notion of Jamison sequences has also been studied in \cite{Devinck1} in the context of $C_0$-semigroups of bounded linear operators. An increasing sequence of positive real numbers $(t_k)_{k\geqslant 0}$ (which tends to infinity) is called a Jamison sequence for $C_0$-semigroups if for every infinite-dimensional separable complex Banach space $X$ and every $C_0$-semigroup $(T_t)_{t\geqslant 0}$ of bounded linear operators on $X$ (with infinitesimal generator $A$), the set $\sigma_p(A)\cap i\mathbb{R}$ is at most countable. We have the following characterization of Jamison sequences in this context. We denote by $\Vert x \Vert$ the distance of the real number $x$ to the integers, that is 
$$
\Vert x \Vert=\inf\big\{|x-n|\,\big|\,n\in \mathbb{Z}\big\}
$$

\begin{Theo}[\cite{Devinck1}, Theorem 3.3]\label{Rjamison}
Let $(t_k)_{k\geqslant 0}$ be an increasing sequence of positive real numbers such that $t_0=1$ and $\underset{k\to +\infty}{\lim}t_k=+\infty$. The following assertions are equivalent$:$
\begin{enumerate}
\item[$(1)$] $(t_k)_{k\geqslant 0}$ is a Jamison sequence for $C_0$-semigroup$;$
\item[$(2)$] there exists $\varepsilon>0$ such that for every $\theta\in \big]0,\frac{1}{2}\big]$,
$$
\sup_{k\geqslant 0}\Vert t_k\theta\Vert\geqslant \varepsilon
$$
\end{enumerate}
\end{Theo}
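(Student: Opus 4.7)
The plan is to prove both implications, following the template established by Badea and Grivaux in Theorem \ref{Zjamison} and adapting it to the continuous-time setting. The implication $(2)\Rightarrow(1)$ is a Jamison-style argument which converts the uniform separation in $(2)$ into a uniform separation of eigenvectors on the unit sphere of $X$, contradicting separability; this is the more straightforward direction. The implication $(1)\Rightarrow(2)$ is the harder half, and I would establish it contrapositively by constructing an explicit $C_0$-semigroup with uncountable imaginary point spectrum yet bounded along the sequence $(T_{t_k})$, mirroring the construction in \cite{BadeaGrivaux2}.

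For $(2)\Rightarrow(1)$: fix a $C_0$-semigroup $(T_t)_{t\geqslant 0}$ on a separable Banach space $X$ with generator $A$ and $M:=\sup_{k\geqslant 0}\Vert T_{t_k}\Vert<+\infty$; assume for contradiction that $\sigma_p(A)\cap i\mathbb{R}$ is uncountable. Since $i\mathbb{R}$ is separable, one can extract an accumulation point $2i\pi\theta_0$ of $\sigma_p(A)\cap i\mathbb{R}$ together with a sequence of distinct eigenvalues $2i\pi\theta_n\to 2i\pi\theta_0$ and corresponding unit eigenvectors $x_n$. For $n$ large enough, $|\theta_n-\theta_0|\in\, ]0,1/2]$, so condition $(2)$ applied to $\theta=|\theta_n-\theta_0|$ produces some $k_n$ with $\Vert t_{k_n}(\theta_n-\theta_0)\Vert\geqslant \varepsilon$; since $|e^{2i\pi s}-1|\geqslant 4\Vert s\Vert$ for every real $s$, this entails $|e^{2i\pi t_{k_n}(\theta_n-\theta_0)}-1|\geqslant 4\varepsilon$. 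Exploiting the identity
\[
T_{t_{k_n}}(x_n-\alpha x_0) = e^{2i\pi t_{k_n}\theta_n}x_n - \alpha\, e^{2i\pi t_{k_n}\theta_0}x_0, \qquad \alpha\in\mathbb{T},
\]
together with $\Vert T_{t_{k_n}}\Vert\leqslant M$, one derives via a standard Hahn--Banach computation a uniform lower bound $\inf_{\alpha\in\mathbb{T}}\Vert x_n-\alpha x_0\Vert\geqslant \delta$ for some $\delta=\delta(M,\varepsilon)>0$. Applying this to an uncountable subfamily of eigenvalues clustering near $\theta_0$ then produces uncountably many directions in $X$ pairwise $\delta$-separated on the projective unit sphere, contradicting separability.

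For $(1)\Rightarrow(2)$: negating $(2)$ produces a sequence $(\theta_n)_{n\geqslant 1}$ in $]0,1/2]$ with $\sup_k\Vert t_k\theta_n\Vert\to 0$. Following the construction of \cite{BadeaGrivaux2} and transposing it to $C_0$-semigroups, I would build a separable reflexive Banach space $X$ as a weighted sequence space with basis $(e_\theta)_{\theta\in\Theta}$ indexed by a countable set $\Theta\subset\, ]0,1/2]$ extracted from the $\theta_n$, and define a diagonal $C_0$-semigroup by $T_te_\theta=e^{2i\pi t\theta}e_\theta$, perturbed by a small weighted shift if cyclicity is needed. By construction $\{2i\pi\theta:\theta\in\Theta\}\subset\sigma_p(A)\cap i\mathbb{R}$, and the weights are calibrated so that the closure of this set in $i\mathbb{R}$---which can be arranged to be uncountable---also consists of eigenvalues of $A$. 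The key technical point, and main obstacle, is the calibration of these weights: they must be large enough to guarantee genuine additional eigenvalues at the closure, yet small enough that $\sup_k\Vert T_{t_k}\Vert<+\infty$; the second estimate is precisely where the assumption $\sup_k\Vert t_k\theta_n\Vert\to 0$ is exploited, since it controls uniformly in $\theta\in\Theta$ the oscillation of the diagonal multipliers $e^{2i\pi t_k\theta}$. Finally, strong continuity of $t\mapsto T_t$ at $0$ requires a uniform estimate on $(e^{2i\pi t\theta}-1)_{\theta\in\Theta}$ as $t\to 0$, which is ensured by the same weight choice.
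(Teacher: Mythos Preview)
The paper does not contain a proof of this theorem: it is quoted from \cite{Devinck1} (as Theorem 3.3 there) purely as background in the Introduction, and no argument for it appears anywhere in the present paper. There is therefore nothing here to compare your proposal against.

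That said, two comments on your sketch itself. For $(2)\Rightarrow(1)$, your detour through an accumulation point is the right move, since condition~$(2)$ is only stated for $\theta\in\,]0,1/2]$; however, the sentence ``applying this to an uncountable subfamily \dots\ produces uncountably many directions pairwise $\delta$-separated'' skips a step. What you have established is separation of each $x_n$ from the single line $\mathbb{C}x_0$, not pairwise separation. The fix is immediate---run the same estimate for any pair $\theta_n,\theta_m$ in the uncountable cluster, since $|\theta_n-\theta_m|\in\,]0,1/2]$ as well---but it should be said. Once that is done the argument is essentially the one in Proposition~\ref{conditionsuffisante} of the present paper, specialised to the group $\mathbb{R}$. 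For $\neg(2)\Rightarrow\neg(1)$, your outline is plausible and identifies the genuine difficulties (weight calibration and strong continuity at $t=0$), but it remains a high-level plan rather than a proof; the actual construction in \cite{Devinck1} (or equivalently the one carried out in the present paper for discrete groups, Theorem~\ref{characterization}) is substantially more delicate than a diagonal semigroup with a perturbation, and your sketch does not yet exhibit the mechanism by which uncountably many eigenvalues are produced.
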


It is easy to check that condition $(2)$ is equivalent to the following:
\begin{enumerate}
\item[$(3)$] there exists $\varepsilon>0$ such that for every $x\in \big]0,\frac{1}{2}\big]$, 
\begin{equation}\label{Rcharacterization}
\sup_{k\geqslant 0}\big\vert e^{2i\pi xt_k}-1\big\vert\geqslant \varepsilon
\end{equation}
\end{enumerate}
Condition $(3)$ is a characterization of Jamison sequences of $C_0$ semigroups of bounded linear operators in terms of continuous group homomorphisms $\chi_x : y \longmapsto e^{2\pi i xy}$ of the group $\mathbb{R}$.

We define in the next section the notion of Jamison sequences $(g_k)_{k\geqslant 0}$ belonging to some general topological abelian group $G$.

\subsection{Jamison sequences in topological abelian groups}

In order to define the analog of Jamison sequences in topological abelian groups, we first have to introduce the object which will play the role of bounded linear operator in the context of integer Jamison sequences. If $X$ is a Banach space, we denote by $\mathcal{GL}(X)$ the group of invertible bounded linear operators on $X$.

\begin{Def}
Let $G$ be a topological abelian group. A \textit{representation of} $G$ is a couple $(X,\rho)$ where $X$ is a separable infinite-dimensional complex Banach space and $\rho : G \longrightarrow \mathcal{GL}(X)$ is a continuous homomorphism of groups.
\end{Def}

If $T$ is an invertible bounded linear operator acting on some Banach space $X$, then a complex number $\lambda$ is an eigenvalue of $T$ if and only if there exists a vector $e_\lambda\in X\setminus\{0\}$ such that $T^ne_\lambda=\lambda^n e_\lambda$ for every integer $n$. We then have the following definition in the context of representation of group. If $G$ is a topological abelian group then the Pontryagin dual (or topological dual) of $G$ is the abelian group $\widehat{G}$ consisting in all continuous homomorphisms $\chi$ from $G$ to the unit circle $\mathbb{T}$ of the complex plane. An element of $\widehat{G}$ will be called a \textit{character} of $G$.

\begin{Def}
Let $G$ be a topological abelian group and $(X,\rho)$ be a representation of $G$. A character $\chi\in \widehat{G}$ is called a \textit{unimodular eigenvalue of} $\rho$ if there exists a vector $e_\chi\in X\setminus\{0\}$ such that
$$
\forall g\in G,\qquad \rho(g)e_\chi=\chi(g)e_\chi
$$
The set of unimodular eigenvalues of $\rho$ is denoted by $\sigma_p(\rho)\cap \widehat{G}$ and is called \textit{the unimodular point spectrum} of $\rho$.
\end{Def}

We are also interested in the relationship between \textit{partial boundedness} of a representation $(X,\rho)$ of a group $G$ with respect to a sequence of $G$ and the size of the unimodular point spectrum of $\rho$.

\begin{Def}
Let $G$ be a topological abelian group, $(g_k)_{k \geqslant 0}$ be a sequence of elements of $G$ and $(X,\rho)$ a representation of $G$. We say that $\rho$ is \textit{partially bounded with respect to the sequence} $(g_k)_{k\geqslant 0}$ if $\sup_{k\geqslant 0}\Vert \rho(g_k)\Vert<+\infty$.
\end{Def}

Sequences of elements of $G$ for which partial boundedness implies countability of the unimodular point spectrum will be called $G$-Jamison sequences. More precisely, we have the following definition.

\begin{Def}
Let $G$ be a topological abelian group and $(g_k)_{k\geqslant 0}$ be a sequence of elements of $G$. We say that $(g_k)_{k\geqslant 0}$ is a $G$-Jamison sequence if for every representation $(X,\rho)$ of the group $G$ which is partially bounded with respect to the sequence $(g_k)_{k\geqslant 0}$, the unimodular point spectrum $\sigma_p(\rho)\cap \widehat{G}$ of $\rho$ is at most countable.
\end{Def}

Even if Theorems \ref{Zjamison} and \ref{Rjamison} seem to be only concerned in sequences of the semigroups $\mathbb{N}$ and $\mathbb{R}_+$ respectively, one can easily verify that the proofs of this theorems are also valid in $\mathbb{Z}$ and $\mathbb{R}$ and we have the same characterizations \eqref{Zcharacterization} and \eqref{Rcharacterization} of $\mathbb{Z}$-Jamison sequences and $\mathbb{R}$-Jamison sequences. Let us mention some other examples of $G$-Jamison sequences (or not).

\begin{Ex}
A characterization of $G$-Jamison sequences has been obtained in \cite{Devinck2} for sequences which belong to a finitely generated abelian group $G$:

\begin{Theo}[\cite{Devinck2}, Theorem 10.4.1]
Let $G$ be the finitely generated abelian group 
$$
\mathbb{Z}^\ell\times \mathbb{Z}/a_1\mathbb{Z}\times\dots\times \mathbb{Z}/a_r\mathbb{Z}
$$
where $a_1,\dots,a_r$ are integers greater or equal than $2$. Let $(g_k)_{k\geqslant 1}$ be a sequence of elements of $G$ such that 
$$
\forall k\in \{1,\dots,\ell+r\},\qquad g_k=(\underbrace{0,\dots,0,1}_{k\textrm{ times}},0,\dots,0)
$$
Then the following assertions are equivalent$:$
\begin{enumerate}
\item[$(1)$] $(g_k)_{k\geqslant 1}$ is a $G$-Jamison sequence$;$
\item[$(2)$] there exists $\varepsilon>0$ such that for every $\chi\in \widehat{G}\setminus\{\mathbf{1}\}$, we have
$$
\sup_{k\geqslant 1}|\chi(g_k)-1|\geqslant \varepsilon
$$
\end{enumerate}
where $\widehat{G}$ denote the group of all homomorphisms from $G$ to $\mathbb{T}$.
\end{Theo}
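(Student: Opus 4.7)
The plan is to prove the two implications separately, following the structure of the proof of Theorem~\ref{Zjamison} by Badea and Grivaux. The implication $(2)\Rightarrow(1)$ rests on a short separability argument that generalizes Jamison's original idea to representations of abelian groups. The reverse implication $(1)\Rightarrow(2)$ is the substantive part and proceeds by contraposition, exhibiting an explicit partially bounded representation with uncountable unimodular point spectrum whenever the arithmetic condition fails.

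For $(2)\Rightarrow(1)$, let $(X,\rho)$ be a representation of $G$ with $M:=\sup_{k\geqslant 1}\Vert\rho(g_k)\Vert<+\infty$, and let $\chi,\chi'\in\sigma_p(\rho)\cap\widehat{G}$ be two distinct unimodular eigenvalues with unit eigenvectors $e_\chi,e_{\chi'}$. From the decomposition
$$
\rho(g_k)(e_\chi-e_{\chi'})=\chi(g_k)(e_\chi-e_{\chi'})+\bigl(\chi(g_k)-\chi'(g_k)\bigr)e_{\chi'},
$$
together with $|\chi(g_k)|=1$, partial boundedness yields
$$
|\chi(g_k)-\chi'(g_k)|\leqslant (M+1)\Vert e_\chi-e_{\chi'}\Vert
$$
for every $k$. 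Taking the supremum in $k$ and noting that $\chi\chi'^{-1}$ is a nontrivial element of $\widehat{G}$, condition $(2)$ gives $\Vert e_\chi-e_{\chi'}\Vert\geqslant \varepsilon/(M+1)$. Thus any family of normalised eigenvectors is uniformly discrete in $X$, hence countable by separability of $X$.

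For $(1)\Rightarrow(2)$, I would argue by contraposition: assume the arithmetic condition fails, so there exists a sequence of characters $(\chi_n)_{n\geqslant 1}\subset\widehat{G}\setminus\{\mathbf{1}\}$ with $\sup_{k\geqslant 1}|\chi_n(g_k)-1|\longrightarrow 0$. Writing each $\chi_n$ via its values $(\chi_n(g_1),\dots,\chi_n(g_{\ell+r}))\in\mathbb{T}^\ell\times\mu_{a_1}\times\cdots\times\mu_{a_r}$ under the canonical identification of $\widehat{G}$, one selects an uncountable family $\Lambda\subset\widehat{G}$ accumulating on $\mathbf{1}$ along $(g_k)_{k\geqslant 1}$. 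The construction then mimics the Badea--Grivaux Banach space from Theorem~\ref{Zjamison}: take $X$ to be the completion of the linear span of formal vectors $(e_\chi)_{\chi\in\Lambda}$ with a suitable norm, and define $\rho$ by $\rho(g)e_\chi=\chi(g)e_\chi$. Because $g_1,\dots,g_{\ell+r}$ are the standard generators, $\rho$ is determined by its values on these generators, and the relations $a_j g_{\ell+j}=0$ are automatically respected since $\chi(a_j g_{\ell+j})=1$ for every $\chi\in\widehat{G}$; hence extending to a homomorphism $G\to\mathcal{GL}(X)$ is unambiguous once the generator operators are defined diagonally.

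The main obstacle is the choice of $\Lambda$ and of the norm on $X$ in such a way that $\sup_{k\geqslant 1}\Vert\rho(g_k)\Vert<+\infty$ while preserving uncountability of $\sigma_p(\rho)\cap\widehat{G}$. This requires Kalisch-type estimates adapted to the mixed product structure of $\widehat{G}$: the $\mathbb{T}^\ell$-component supports the Badea--Grivaux construction for the ``continuous'' generators $g_1,\dots,g_\ell$, while the torsion components $\mu_{a_j}$ contribute only finitely many values, so partial boundedness on them is automatic once $\rho(g_{\ell+j})^{a_j}=\mathrm{Id}_X$. The delicate estimates lie entirely in the $\mathbb{T}^\ell$ direction and are obtained by exploiting the quantitative convergence $\sup_{k}|\chi_n(g_k)-1|\to 0$ to control the norms of the diagonal operators along the sequence $(g_k)_{k\geqslant 1}$.
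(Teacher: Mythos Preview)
The paper does not actually prove this theorem; it is quoted verbatim from \cite{Devinck2} as an example. What the present paper proves is the more general Theorem~\ref{characterization} for arbitrary countably infinite discrete abelian groups, which specialises to this statement when $\ell\geqslant 1$ (since then the sequence contains the standard generators, $G_0=G$, and $d_{(g_k)}$ is a genuine distance). So the relevant comparison is with the proof of Theorem~\ref{characterization}.

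Your $(2)\Rightarrow(1)$ is correct and is exactly Proposition~\ref{conditionsuffisante} in the case $G_0=G$.

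For $(1)\Rightarrow(2)$ your route diverges from the paper's. The paper does not build a diagonal representation on formal eigenvectors; instead it works with the \emph{translation} representation on a weighted space $\ell^2(G,w)$ (where the sub-invariant weight $w$ from \eqref{weight} guarantees that every $\rho(g)$ is bounded), renorms via \eqref{norm} to force $\sup_k\Vert\rho(g_k)\Vert\leqslant 3$, and then restricts to the closed span $X_\ast$ of an uncountable set $K\subset\widehat G$ that is separable for $d_{(g_k)}$. Separability of $X_\ast$ is obtained from Lemma~\ref{continuity}, which shows the eigenvector field $\chi\mapsto\chi$ is $d_{(g_k)}$-continuous in the new norm. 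Your proposal, by contrast, aims at the Badea--Grivaux diagonal model; this can be made to work for finitely generated $G$, but your sketch has two genuine gaps. First, from a sequence $(\chi_n)$ with $d_{(g_k)}(\chi_n,\mathbf 1)\to 0$ you must actually \emph{construct} an uncountable $\Lambda$ with controlled mutual distances; this is the Cantor-type argument carried out in the paper's proof of $(3)\Rightarrow(5)$ and cannot be waved through. Second, and more seriously, you never address separability of the completed span of $(e_\chi)_{\chi\in\Lambda}$: with uncountably many linearly independent eigenvectors, separability of $X$ is not automatic and requires that the norm be chosen so that the eigenvector map $\chi\mapsto e_\chi$ is continuous from $(\Lambda,d_{(g_k)})$ into $X$, together with separability of $\Lambda$ itself. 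Your discussion focuses only on partial boundedness and uncountable spectrum, but without separability of $X$ the representation does not witness failure of the Jamison property.
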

\end{Ex}

\begin{Ex}
If $G$ is any finite abelian group, then every sequence of elements of $G$ is a $G$-Jamison sequence (since $\widehat{G}\simeq G$ is a finite group).
\end{Ex}

\begin{Ex}
If $G=(\mathbb{Z}/2\mathbb{Z})^\mathbb{N}$ is the group of infinite sequences of zeroes and ones, then $\widehat{G}\simeq \bigoplus_{n\geqslant 0}\mathbb{Z}/2\mathbb{Z}$ consists in all infinite sequences with finite number of ones. In particular $\widehat{G}$ is countable. Hence every sequence of elements of $G$ is a $G$-Jamison sequence. More generally, every sequence of a compact group $G$ with countable Pontryagin dual is a $G$-Jamison sequence.
\end{Ex}

Let us now consider the more interesting example of the discrete group $G=\bigoplus_{n\geqslant 0}\mathbb{Z}:=\mathbb{Z}^\infty$ which consists of all infinite sequences of integers with finitely non-zero components. In such a group, it is not difficult to give sequences which are not $\mathbb{Z}^\infty$-Jamison sequences. 

\begin{Ex}\label{exempleZinfini}
Let $G=\mathbb{Z}^\infty$. The Pontryagin dual of $G$ is $\widehat{G}=\mathbb{T}^\mathbb{N}$.
\begin{enumerate}
\item If $(e_k)_{k\geqslant 0}$ denote the canonical basis of $G$ (where $e_k$ is the $k^\textrm{th}$ Kronecker symbol), then $(e_k)_{k\geqslant 0}$ is not a $G$-Jamison sequence.
\item If the sequence $(g_k)_{k\geqslant 0}$ \textit{does not visit} at least one component of $\mathbb{Z}^\infty$ then $(g_k)_{k \geqslant 0}$ is not a $G$-Jamison sequence. For instance, if $(g_k)_{k\geqslant 0}$ is any sequence such that $\{g_k\,|\,k\geqslant 0\}=\bigoplus_{n\geqslant 1}\mathbb{Z}e_k$ (that is the subgroup of $G$ generated by the sequence $(e_k)_{k\geqslant 1}$) then $(g_k)_{k\geqslant 0}$ is not a $G$-Jamison sequence.
\end{enumerate}
\end{Ex}

\begin{proof}
A representation $(X,\rho)$ of $G$ is completely determined by the operators $\rho(e_k)\in \mathcal{GL}(X)$ $(k\geqslant 0)$. Take any separable complex Banach space $X$ and any invertible operator $T$ on $X$ such that $\sigma_p(T)\cap \mathbb{T}$ is uncountable (for instance $X=\ell^2(\mathbb{Z})$ and $T$ is the backward shift defined by $T(x_n)_{n\in \mathbb{Z}}=(x_{n+1})_{n\in \mathbb{Z}}$). Then the representation $(X,\rho)$ of $G$ which is defined by $\rho(e_0)=T$ and $\rho(e_k)=\mathrm{Id}_X$ when $k$ is a positive integer is clearly partially bounded with respect to the sequence $(e_k)_{k\geqslant 0}$ (and with respect to the sequence $(g_k)_{k\geqslant 0}$) and the unimodular point spectrum of $\rho$ is
$$
\sigma_p(\rho)\cap\mathbb{T}^\mathbb{N}=\big\{(m_k)_{k\geqslant 0} \longmapsto \lambda^{m_k}\,\big|\,\lambda\in \sigma_p(T)\cap \mathbb{T}\big\}
$$
Since $\sigma_p(T)\cap \mathbb{T}$ is uncountable, the set $\sigma_p(\rho)\cap \mathbb{T}^\mathbb{N}$ is uncountable.
\end{proof}

\subsection{Organization of the paper}

Our goal is to characterize $G$-Jamison sequences when $G$ is a countably infinite discrete abelian group. As in the case of the group $\mathbb{Z}$ of integers, we show that being a $G$-Jamison sequence or not depends on the arithmetical properties of the sequence $(g_k)_{k\geqslant 0}$. The main result of the paper is the following.

\begin{Theo}\label{theorem}
Let $G$ be a countably infinite discrete abelian group. Let $(g_k)_{k\geqslant 0}$ be a sequence of elements of $G$.
\begin{enumerate}
\item[$(1)$] When the subgroup $G_0$ of $G$ generated by the sequence $(g_k)_{k\geqslant 0}$ is of infinite index in $G$ $($that is if the factor group $G/G_0$ is infinite$)$ then $(g_k)_{k\geqslant 0}$ is not a $G$-Jamison sequence.
\item[$(2)$] If the index $[G:G_0]$ is finite, then one can assume without loss of generality that $G=G_0$ and $(g_k)_{k\geqslant 0}$ is a $G$-Jamison sequence if and only if there exists a positive real number $\varepsilon$ such that for every character $\chi\in \widehat{G}\setminus\{\mathbf{1}\}$,
\begin{equation}\label{arithmetical}
\sup_{k\geqslant 0}|\chi(g_k)-1|\geqslant \varepsilon
\end{equation}
\end{enumerate}
\end{Theo}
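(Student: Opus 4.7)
My approach is to reduce the theorem to a single technical construction---a representation with uncountable unimodular point spectrum in the group setting, adapted from the classical construction of Badea and Grivaux \cite{BadeaGrivaux2}---and apply it in two different regimes. When $[G:G_0] = \infty$, the quotient $H := G/G_0$ is an infinite countable discrete abelian group, so its Pontryagin dual $\widehat H$ is a compact metrizable abelian group with no isolated points, hence uncountable. Via the quotient map $\pi : G \to H$, every character of $H$ lifts to a character of $G$ that is identically $1$ on $G_0$, and in particular on every $g_k$. It therefore suffices to construct a representation $(X,\bar\rho)$ of $H$ on a separable Banach space whose unimodular point spectrum is uncountable; pulling back by $\pi$ yields a representation $\rho := \bar\rho\circ\pi$ of $G$ with $\rho(g_k) = \mathrm{Id}_X$ for every $k$ (hence trivial partial boundedness) and uncountable $\sigma_p(\rho)\cap\widehat G$. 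For part (2), when $[G:G_0]$ is finite, standard restriction/induction arguments between $G$ and $G_0$ transfer the Jamison property in both directions (cardinalities of the unimodular point spectra are preserved up to a finite factor), so one reduces to the case $G = G_0$, where $(g_k)$ generates the ambient group.

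\textbf{Easy direction $(2) \Rightarrow (1)$.} Suppose (\ref{arithmetical}) holds with constant $\varepsilon > 0$ and let $(X,\rho)$ be partially bounded with $M := \sup_k\|\rho(g_k)\|$. For two distinct unimodular eigen-characters $\chi,\psi$ with unit eigenvectors $e_\chi,e_\psi$, I adapt Jamison's classical argument via the identity
\[
\rho(g_k)(e_\chi - e_\psi) \;=\; (\chi(g_k) - \psi(g_k))\,e_\chi \;+\; \psi(g_k)\,(e_\chi - e_\psi).
\]
Taking norms yields $|\chi(g_k) - \psi(g_k)| \leq (M+1)\,\|e_\chi - e_\psi\|$ for every $k$. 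Since $\chi\psi^{-1}$ is nontrivial, $\sup_k|\chi(g_k)-\psi(g_k)| = \sup_k|\chi\psi^{-1}(g_k)-1| \geq \varepsilon$ by (\ref{arithmetical}), so $\|e_\chi - e_\psi\| \geq \varepsilon/(M+1)$. The normalized eigenvectors are thus uniformly separated in $X$, and separability forces $\sigma_p(\rho)\cap\widehat G$ to be countable.

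\textbf{Hard direction and main obstacle.} When (\ref{arithmetical}) fails---and in particular in the degenerate setting of part (1), where characters $\chi$ with $\chi(g_k)\equiv 1$ abound---one selects characters $\chi_n \in \widehat G\setminus\{\mathbf 1\}$ with $\sup_k|\chi_n(g_k)-1| \to 0$ and carries out the Badea--Grivaux-type construction that the paper explicitly announces. One builds a separable Banach space $X$ with a Schauder basis $(e_n)_{n\geq 0}$, and defines $\rho(g)$ as a weighted perturbation of the diagonal operator $e_n \mapsto \chi_n(g)\,e_n$ by carefully chosen rank-one-type terms. The weights are tuned so that the smallness of $|\chi_n(g_k)-1|$ forces $\sup_k\|\rho(g_k)\| < \infty$, while a limiting argument over an auxiliary parameter extracts uncountably many unimodular eigen-characters. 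This construction is the core difficulty: unlike the $\mathbb Z$-case, where powers of a single generator are immediately available, here one must adapt the Banach space, weights, and norm estimates to an arbitrary countable abelian group---potentially torsion, divisible, or of mixed type---and parametrize the uncountable eigenvector family by characters of $\widehat G$ rather than by points of $\mathbb T$. Once this construction is in hand, both part (1) and the hard direction of part (2) follow.
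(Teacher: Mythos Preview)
Your easy direction and the reduction in part~(2) to $G=G_0$ match the paper exactly. Your strategy for part~(1)---pull back a representation of $H=G/G_0$ through the quotient map so that every $\rho(g_k)=\mathrm{Id}$---is also correct and in fact slightly cleaner than what the paper does (the paper works on $\ell^2(G,w)$ and restricts to the closed span of $G_0^\perp$, which amounts to the same thing). But you overstate what is needed for part~(1): you do \emph{not} need the Badea--Grivaux machinery here. All you need is some separable Banach space on which $H$ acts by bounded invertible operators and in which every character of $H$ is an eigenvector; the paper obtains this by a weighted space $\ell^2(H,w)$ with a sub-invariant weight $w$ built from convolution powers of a summable probability measure on $H$ (so that characters lie in the space and translations are bounded). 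This is the missing ingredient you defer, and it is independent of the hard direction of part~(2).

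The real divergence is in the hard direction of part~(2). You propose to follow the original Badea--Grivaux scheme: take a Schauder basis $(e_n)$, set $\rho(g)e_n=\chi_n(g)e_n$ plus rank-one-type corrections, and tune weights. This runs into a structural obstacle that does not arise for $\mathbb{Z}$: you need $\rho(g)\rho(h)=\rho(g+h)$ for \emph{all} $g,h\in G$, so the off-diagonal perturbations attached to different group elements must commute and compose correctly. For a single operator and its powers this is automatic; for an infinitely generated group it is not, and you give no indication of how to enforce it. The paper avoids this entirely by a different construction: it takes the translation representation on $\ell^2(G,w)$ (which is a genuine representation from the start), then \emph{renorms} the space via
\[
\|f\|_*=\max\Bigl(\|f\|,\ \sup_{j\ge 0}2^{-j-1}\sup_{k_0,\dots,k_j}\Bigl\|\prod_{i=0}^{j}(\rho(g_{k_i})-\mathrm{Id})f\Bigr\|\Bigr),
\]
which forces $\sup_k\|\rho(g_k)\|_*\le 3$, and finally passes to the closed linear span of an uncountable but $d_{(g_k)}$-separable set $K\subset\widehat{G}$ (produced by a Cantor-type construction from the negation of~\eqref{arithmetical}) to make the space separable while keeping the spectrum uncountable. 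The renorming-plus-translation approach is the key idea you are missing; it replaces the delicate operator-building of \cite{BadeaGrivaux2} by a norm-building problem in which commutativity is free.
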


In section 2, we investigate condition \eqref{arithmetical}. We show in Proposition \ref{conditionsuffisante} that $(g_k)_{k\geqslant 0}$ is a $G$-Jamison sequence as soon as the subgroup $G_0$ generated by $(g_k)_{k\geqslant 0}$ is of finite index in $G$ and when the characters of $G_0$ are uniformly separated by the sequence $(g_k)_{k\geqslant 0}$. A consequence of this is that the whole sequence of elements of the countable group $G$ is a Jamison sequence (Corollary \ref{exemplejamison}), extending the seminal result of Jamison.

The first main difference with the group of integers is that , given a sequence $(g_k)_{k\geqslant 0}$ of elements of a countably infinite discrete abelian group $G$, the map
\begin{align}\label{distance}
d_{(g_k)} : \left\{\begin{array}{c c c}
\widehat{G}\times \widehat{G} & \longrightarrow & \mathbb{R}\\
(\chi,\varphi) & \longmapsto & \sup_{k\geqslant 0}|\chi(g_k)-\varphi(g_k)|
\end{array}\right.
\end{align}
cannot be made into a distance on $\widehat{G}$ since $G$ is not finitely generated in general. In section 3, we show that when $d_{(g_k)}$ is \textit{very far} from being a distance on $\widehat{G}$, that is if the index $[G:G_0]$ of the subgroup $G_0$ of $G$ generated by the sequence $(g_k)_{k\geqslant 0}$ is infinite, then $(g_k)_{k\geqslant 0}$ is not a $G$-Jamison (Theorem \ref{nonjamison}). This phenomenon generalizes what we observe in example \ref{exempleZinfini}.

In section 4, we then characterize $G$-Jamison sequences $(g_k)_{k\geqslant 0}$ when the subgroup $G_0$ (generated by the sequence $(g_k)_{k\geqslant 0}$) is of finite index in $G$. In this case, one can assume the $(g_k)_{k\geqslant 0}$ is a generate sequence of $G$, that is $G=G_0$ (Proposition \ref{wlog}). We prove in Theorem \ref{characterization} that $(g_k)_{k\geqslant 0}$ is a $G$-Jamison sequence if and only if any two distinct characters of $G$ are (uniformly) $\varepsilon$-separated for the distance $d_{(g_k)}$ for some $\varepsilon>0$.

In the rest of the paper, $G$ is a countably infinite discrete abelian group. Note that, in this context, every group homomorphism $\chi$ from $G$ to the unit circle $\mathbb{T}$ is automatically continuous. Hence the Pontryagin dual $\widehat{G}$ of $G$ is the group of all (continuous) homomorphisms $\chi$ from $G$ to $\mathbb{T}$. In $\widehat{G}$, the trivial character (identically equal to one) is denoted by $\mathbf{1}$.

If $(g_k)_{k\geqslant 0}$ is a sequence of elements of $G$, we denote by $G_0$ the subgroup of $G$ generated by this sequence, that is the subset of $G$ consisting in all (finite) linear combinations of elements of $(g_k)_{k\geqslant 0}$ with integer coefficients. The index of $G_0$ in $G$ is denoted $[G:G_0]$; recall that $[G:G_0]$ stands for the cardinality of the quotient group $G/G_0$. If $\chi$ and $\varphi$ are two characters of $G$, we put 
$$
d_{(g_k)}(\chi,\varphi):=\sup_{k\geqslant 0}|\chi(g_k)-\varphi(g_k)|
$$

In the next section we investigate the sufficient condition \eqref{arithmetical} of Theorem \ref{theorem}.

\section{An arithmetical sufficient condition}

Let $G$ be a countably infinite discrete abelian group and $(g_k)_{k\geqslant 0}$. We first investigate a sufficient condition which provides $G$-Jamison sequences.

\subsection{A sufficient condition}

We begin with the following general easy lemma.

\begin{Lem}\label{equipotent}
Let $G$ be a topological abelian group, $H$ a subgroup of $G$ and let $\chi$ be a character of $G$. The set 
$$
\Gamma_\chi=\big\{\varphi\in \widehat{G}\,\big|\,\varphi_{\restriction_H}=\chi_{\restriction_H}\big\}
$$
is equipotent to $\widehat{G/H}$.
\end{Lem}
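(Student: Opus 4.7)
The plan is to exhibit an explicit bijection between $\widehat{G/H}$ and $\Gamma_\chi$, which will turn out to be a ``translate'' by $\chi$ of the canonical embedding of $\widehat{G/H}$ into $\widehat{G}$.

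Write $\pi : G \longrightarrow G/H$ for the canonical quotient morphism, which is a continuous surjective group homomorphism. Define
$$\Phi : \widehat{G/H} \longrightarrow \Gamma_\chi, \qquad \Phi(\psi) = \chi \cdot (\psi \circ \pi).$$
First I would check that $\Phi$ is well-defined, i.e.\ $\Phi(\psi) \in \Gamma_\chi$: since $\psi \circ \pi$ is a continuous character of $G$ which is identically $1$ on $H$, the product $\chi \cdot (\psi \circ \pi)$ is a continuous character of $G$ whose restriction to $H$ agrees with $\chi_{\restriction_H}$.

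Next, injectivity of $\Phi$ follows from surjectivity of $\pi$: an equality $\Phi(\psi_1) = \Phi(\psi_2)$ reduces after multiplying by $\chi^{-1}$ to $\psi_1 \circ \pi = \psi_2 \circ \pi$ on $G$, whence $\psi_1 = \psi_2$ on $G/H$. For surjectivity, given $\varphi \in \Gamma_\chi$, consider the continuous character $\varphi \cdot \chi^{-1}$ of $G$: its restriction to $H$ is trivial, so by the universal property of the quotient (both as a group and as a topological space) it factors uniquely as $\psi \circ \pi$ for some $\psi \in \widehat{G/H}$, and then $\Phi(\psi) = \varphi$ by construction.

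The only delicate point is this last factorization, which requires the induced homomorphism $\psi$ on $G/H$ to be continuous. When $G/H$ is endowed with the quotient topology this is immediate from its universal property. In the main setting of the paper $G$ is discrete, so $G/H$ is automatically discrete and the subtlety disappears entirely. The remaining verifications are routine algebra, and I do not anticipate any genuine obstacle.
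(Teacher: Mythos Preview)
Your proof is correct and takes essentially the same approach as the paper: the paper simply writes down the inverse map $\vartheta:\Gamma_\chi\to\widehat{G/H}$, $\varphi\mapsto(g+H\mapsto\varphi(g)\chi(g)^{-1})$, and declares it a well-defined bijection without further comment. Your $\Phi$ is precisely $\vartheta^{-1}$, and you supply the verifications (including the continuity point for the factorization through the quotient) that the paper leaves implicit.
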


\begin{proof}
It is straightforward to check that 
$$
\vartheta : \left\{\begin{array}{c c c}
\Gamma_\chi & \longrightarrow & \widehat{G/H}\\
\varphi & \longmapsto & (g+H \mapsto \varphi(g)\chi(g)^{-1})
\end{array}\right.
$$
is a well-defined bijective map.
\end{proof}

The sufficient arithmetical condition runs as follows.

\begin{Prop}\label{conditionsuffisante}
Let $G$ be a countably infinite discrete abelian group and $(g_k)_{k\geqslant 0}$ be a sequence of elements of $G$. We denote by $G_0$ the subgroup of $G$ generated by the sequence $(g_k)_{k\geqslant 0}$. We assume that 
\begin{enumerate}
\item[$(1)$] the dual group $\widehat{G/G_0}$ is at most countable$;$
\item[$(2)$] there exists $\varepsilon>0$ such that for every $\chi\in \widehat{G}_0\setminus\{\mathbf{1}\}$, we have $d_{(g_k)}(\chi,\mathbf{1})\geqslant \varepsilon$.
\end{enumerate}
Then $(g_k)_{k\geqslant 0}$ is a $G$-Jamison sequence.
\end{Prop}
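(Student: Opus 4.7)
The plan is to reduce the countability of $\sigma_p(\rho)\cap \widehat{G}$ to the countability of $\sigma_p(\rho|_{G_0})\cap \widehat{G_0}$ via the restriction map $\widehat{G}\to \widehat{G_0}$, and then to establish the latter by a Jamison-type separation estimate fed by hypothesis $(2)$. Throughout, $(X,\rho)$ denotes a representation of $G$ that is partially bounded with respect to $(g_k)_{k\geqslant 0}$, and I set $M=\sup_{k\geqslant 0}\|\rho(g_k)\|$. First I would observe that any unimodular eigenvector $e_\chi$ of $\rho$ associated to $\chi\in \widehat{G}$ is automatically an eigenvector of the restricted representation $\rho|_{G_0}$ for the character $\chi|_{G_0}\in \widehat{G_0}$, so the restriction map sends $\sigma_p(\rho)\cap \widehat{G}$ into $\sigma_p(\rho|_{G_0})\cap \widehat{G_0}$. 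The fibers of this restriction are exactly the sets $\Gamma_\chi$ of Lemma \ref{equipotent} taken with $H=G_0$, hence each of them is equipotent to $\widehat{G/G_0}$, which is at most countable by hypothesis $(1)$. It therefore suffices to prove that $\sigma_p(\rho|_{G_0})\cap \widehat{G_0}$ is at most countable.

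For this step, I would pick two distinct characters $\chi,\varphi\in \sigma_p(\rho|_{G_0})\cap \widehat{G_0}$ with unit eigenvectors $e_\chi,e_\varphi$, and use the identity
$$
\rho(g_k)(e_\chi-e_\varphi)=\chi(g_k)(e_\chi-e_\varphi)+\big(\chi(g_k)-\varphi(g_k)\big)e_\varphi.
$$
Taking norms then yields the estimate $|\chi(g_k)-\varphi(g_k)|\leqslant (M+1)\|e_\chi-e_\varphi\|$ for every $k\geqslant 0$. Applying hypothesis $(2)$ to the non-trivial character $\chi\varphi^{-1}\in \widehat{G_0}\setminus\{\mathbf{1}\}$ gives $\sup_k|\chi(g_k)-\varphi(g_k)|\geqslant \varepsilon$, and hence $\|e_\chi-e_\varphi\|\geqslant \varepsilon/(M+1)$. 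Selecting a normalized eigenvector for each character in $\sigma_p(\rho|_{G_0})\cap \widehat{G_0}$ therefore produces a uniformly $\varepsilon/(M+1)$-separated family in $X$, and the separability of $X$ forces this set to be at most countable.

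I do not expect a serious obstacle: the displayed computation is the classical Jamison pigeonhole argument, and the only structural ingredient is the reduction to $G_0$ provided by Lemma \ref{equipotent} combined with hypothesis $(1)$. The one point that deserves a word is that hypothesis $(2)$ is stated at the trivial character and must be promoted to pairwise separation via the quotient $\chi\varphi^{-1}$; this uses only that $\widehat{G_0}$ is a group and that $|\varphi(g_k)|=1$ for every $k$.
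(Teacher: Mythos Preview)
Your proof is correct and follows essentially the same route as the paper: both combine the Jamison separation estimate $\|e_\chi-e_\varphi\|\geqslant \varepsilon/(M+1)$ (derived from hypothesis~$(2)$ via $\chi\varphi^{-1}$) with the fiber description of Lemma~\ref{equipotent} and hypothesis~$(1)$. The only cosmetic difference is that you phrase the reduction through the restricted representation $\rho|_{G_0}$ and its spectrum $\sigma_p(\rho|_{G_0})\cap\widehat{G_0}$, whereas the paper works directly with characters of $G$ and passes to their restrictions at the end; the content is identical.
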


\begin{proof}
Let $(X,\rho)$ be a representation of the group $G$ which is partially bounded with respect to the sequence $(g_k)_{k\geqslant 0}$ and let $M:=\displaystyle \sup_{k\geqslant 0}\Vert \rho(g_k)\Vert<+\infty$. Let $\chi$ and $\varphi$ be two disctinct unimodular eigenvalues of $\rho$. We denote by $e_\chi$ et $e_\varphi$ two normalized eigenvectors associated to these eigenvalues:
$$
\Vert e_\chi\Vert=\Vert e_\varphi\Vert=1\qquad \textrm{and for every }g\in G,\qquad 
\left\{\begin{array}{c}
\rho(g)e_\chi=\chi(g)e_\chi\\
\rho(g)e_\varphi=\varphi(g)e_\varphi
\end{array}\right.
$$ 
For every non-negative integer $k$, 
$$
\vert \chi(g_k)-\varphi(g_k)\vert-\Vert e_\chi-e_\varphi\Vert\leqslant \Vert \rho(g_k)(e_\chi-e_\varphi)\Vert\leqslant M\Vert e_\chi-e_\varphi\Vert
$$
Hence, 
$$
\Vert e_\chi-e_\varphi\Vert\geqslant \frac{d_{(g_k)}(\chi,\varphi)}{M+1}\geqslant \frac{\varepsilon}{M+1}
$$
where the second inequality follows from condition $(2)$. Since the space $X$ is separable, the set $$
\big\{\chi_{\restriction_{G_0}}\,\big|\,\chi \in \widehat{G}\big\}
$$ 
is at most countable. We conclude the proof by using assertion $(1)$ and Lemma \ref{equipotent}.
\end{proof}

Note that condition $(1)$ of Proposition \ref{conditionsuffisante} is not a problem in the original context of Jamison sequences.

\begin{Rem}
Our condition on the dual of $G/G_0$ is automatically satisfied for the groups $\mathbb{R}$ and $\mathbb{Z}$.
\begin{enumerate}
\item If $G=\mathbb{Z}$ then there exists a positive integer $n$ such that $G_0=n\mathbb{Z}$. Hence $\widehat{G/G_0}$ is isomorphic to the group of $n^\textrm{th}$ roots of unity.
\item If $G=\mathbb{R}$ then there exists $\theta\in \mathbb{R}^*$ such that $\theta\mathbb{Z}\subset G_0$. We can realized $\mathbb{R}/G_0$ as a quotient of $\mathbb{R}/\theta\mathbb{Z}$ by the third isomorphism theorem:
$$
\mathbb{R}/G_0 \simeq (\mathbb{R}/\theta\mathbb{Z})/(G_0/\theta\mathbb{Z})
$$
We know that $\widehat{\mathbb{R}/\theta\mathbb{Z}}\simeq\mathbb{Z}$ is countable. It then suffices to prove that for any subgroup $H$ of an abelian topological group $G$, the character group of $G/H$ is countable as soon as $\widehat{G}$ is countable. This directly follows from the fact that
$$
\left\{\begin{array}{c c c}
\widehat{G/H} & \longrightarrow & \widehat{G}\\
\chi & \longmapsto & (g \mapsto \chi(g+H))
\end{array}\right.
$$
is an injective map.
\end{enumerate}
\end{Rem}

Proposition \ref{conditionsuffisante} allows us to give a natural example of $G$-Jamison sequence when $G$ is a countably infinite discrete abelian group.

\subsection{A Jamison's type result}

Jamison gives in \cite{Jamison} the first example of $\mathbb{Z}$-Jamison sequence: the sequence of integers is a Jamison sequence in the context of sequences of integers. The same phenomenon occurs in general countably infinite discrete abelian groups.

\begin{Cor}\label{exemplejamison}
Let $G=\{g_k\,|\,k\geqslant 0\}$ be a countably infinite discrete abelian group. Then $(g_k)_{k\geqslant 0}$ is a $G$-Jamison sequence.
\end{Cor}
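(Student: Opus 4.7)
The plan is to apply Proposition~\ref{conditionsuffisante}. The hypothesis $G=\{g_k\,|\,k\geqslant 0\}$ says precisely that the sequence $(g_k)_{k\geqslant 0}$ enumerates the whole group $G$, hence the subgroup $G_0$ generated by $(g_k)_{k\geqslant 0}$ coincides with $G$. In particular $G/G_0$ is trivial, its dual $\widehat{G/G_0}$ reduces to the constant character $\mathbf{1}$, and condition $(1)$ of Proposition~\ref{conditionsuffisante} is automatic.

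To verify condition $(2)$, observe that for any character $\chi\in \widehat{G}$, since every element of $G$ appears in the sequence $(g_k)$,
$$
d_{(g_k)}(\chi,\mathbf{1})=\sup_{k\geqslant 0}|\chi(g_k)-1|=\sup_{g\in G}|\chi(g)-1|=\sup_{z\in \chi(G)}|z-1|.
$$
When $\chi\ne \mathbf{1}$ the image $H=\chi(G)$ is a nontrivial subgroup of $\mathbb{T}$, so the problem reduces to proving the elementary fact that every nontrivial subgroup $H$ of $\mathbb{T}$ satisfies $\sup_{z\in H}|z-1|\geqslant \sqrt{3}$. I would argue as follows: pick any $h=e^{i\theta}\in H\setminus\{1\}$ with $\theta\in (0,2\pi)$; if $\theta\in [2\pi/3,4\pi/3]$ then $|h-1|=2|\sin(\theta/2)|\geqslant \sqrt{3}$ and we are done; otherwise, replacing $h$ by $h^{-1}$ if necessary, we may assume $\theta\in (0,2\pi/3)$, and then for the smallest integer $n\geqslant 1$ with $n\theta\geqslant 2\pi/3$ we have $n\theta<2\pi/3+\theta<4\pi/3$, so that $h^n\in H$ satisfies $|h^n-1|\geqslant \sqrt{3}$.

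Applying this to $H=\chi(G)$ for each $\chi\in \widehat{G}\setminus\{\mathbf{1}\}$ gives condition $(2)$ with $\varepsilon=\sqrt{3}$, and Proposition~\ref{conditionsuffisante} then yields the corollary. There is no real obstacle: the only ingredient beyond Proposition~\ref{conditionsuffisante} is the elementary observation above about subgroups of $\mathbb{T}$, which is essentially a one-line argument.
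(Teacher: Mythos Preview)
Your proof is correct and follows the same overall plan as the paper --- apply Proposition~\ref{conditionsuffisante}, noting that $G_0=G$ --- but you verify the separation condition $(2)$ by a different (and more elementary) route. The paper does not use the subgroup-of-$\mathbb{T}$ argument; instead it invokes Prodanov's lemma (Lemma~\ref{prodanov}): endowing $G$ with the trivial topology makes any nonconstant character $\chi\varphi^{-1}$ discontinuous, and applying Prodanov's lemma with $M$ the closed disk of radius $r=d_{(g_k)}(\chi,\varphi)$ forces $1\leqslant r$. Your direct argument on $\chi(G)\subset\mathbb{T}$ avoids this external input entirely and even yields the sharp constant $\varepsilon=\sqrt{3}$ (attained for the cube roots of unity), improving on the paper's $\varepsilon=1$. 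The trade-off is that Prodanov's lemma is a ready-made tool from harmonic analysis that the author may have wanted to highlight, whereas your approach is self-contained but specific to the circle.
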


\begin{proof}
Since $(g_k)_{k\geqslant 0}$ is the sequence of all elements of $G$, note that $d_{(g_k)}$ is a well-defined distance on $\widehat{G}$. According to Proposition \ref{conditionsuffisante}, it suffices to prove that any two distinct characters of $\widehat{G}$ are uniformly separated for the distance $d_{(g_k)}$. We show here that this fact follows from a famous result of Prodanov \cite{Prodanov}. 

For any topological abelian group $H$, let us denote by $H^\ast$ the algebraic dual of $H$ which consists in all group homomorphisms from $H$ to the unit circle $\mathbb{T}$ (whereas $\widehat{H}$ is the group of all such continuous homomorphisms).
\begin{Lem}[Prodanov,\cite{Prodanov}]\label{prodanov}
Let $H$ be a topological abelian group, $U$ an open subset of $H$, let $f$ be a continuous function from $H$ to $\mathbb{C}$ and $M$ a convex closed subset of $\mathbb{C}$. Let $k$ be a positive integer and $\chi_1,\dots,\chi_k$ be elements of $H^\ast$. Assume that there exists $(c_1,\dots, c_k)\in \mathbb{C}^k$ such that $\sum_{j=1}^{k}c_j\chi_j(x)-f(x)$ belongs to $M$ for every $x$ in $U$. If 
$$\{\chi_1,\dots,\chi_n\}=\{\chi_1,\dots,\chi_k\}\cap \widehat{H}$$
with $n\leqslant k$, then $\sum_{1\leqslant j\leqslant n}^{}c_j\chi_j(x)-f(x)$ belongs to $M$ for every $x$ in $U$.
\end{Lem}
Let $\chi$ and $\varphi$ be two distinct characters of our discrete (countably infinite abelian) group $G$ (which are continuous when $G$ is equipped with the discrete topology). We take for $U$ the group $G$ and for $M$ the closed disk with center $0$ and radius 
$$
r=d_{(g_k)}(\chi,\varphi)=\sup_{g\in G}|\chi(g)-\varphi(g)|>0
$$ 
The character $\chi\varphi^{-1}$ is non-constant and $d_{(g_k)}(\chi\varphi^{-1},1)\leqslant r$. We now endow $G$ with the trivial topology (the only open sets of $G$ are $\varnothing$ and $G$). Then the non-constant character $\chi\varphi^{-1}$ is discontinuous for this topology on $G$. According to Lemma \ref{prodanov} (recall that $G$ is assumed to be equipped with the trivial topology), we have the inequality $1\leqslant r$, that is to say $1\leqslant d_{(g_k)}(\chi,\varphi)$. Proposition \ref{conditionsuffisante} implies that $(g_k)_{k\geqslant 0}$ is a $G$-Jamison sequence.
\end{proof}

At this stage of the proof, we investigate the first condition we need in Proposition \ref{conditionsuffisante} on the size of the dual group of $G/G_0$. Indeed example \ref{exempleZinfini} shows that a sequence of $\mathbb{Z}^\infty$ has to generate a \textit{big} subgroup of $\mathbb{Z}^\infty$ to be a $\mathbb{Z}^\infty$-Jamison sequence. In the next section, we prove that this is a general phenomenon in countably infinite discrete abelian groups.

\section{A negative result on Jamison sequences}

The following result states that if the subgroup $G_0$ generated by the sequence $(g_k)_{k\geqslant 0}$ is \textit{small} in $G$, then $(g_k)_{k\geqslant 0}$ is never a $G$-Jamison sequence.

\begin{Theo}\label{nonjamison}
Let $G$ be a countably infinite discrete abelian group and $(g_k)_{k\geqslant 0}$ be a sequence of elements of $G$ such that the subgroup $G_0$ of $G$ generated by this sequence is of infinite index in $G$. Then $(g_k)_{k\geqslant 0}$ is not a $G$-Jamison sequence.
\end{Theo}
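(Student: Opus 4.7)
Set $H:=G/G_0$ and let $\pi:G\to H$ denote the canonical projection. The hypothesis $[G:G_0]=+\infty$ says exactly that $H$ is a countably infinite discrete abelian group. My strategy is to produce the offending representation of $G$ by pulling back one of $H$ through $\pi$. Given any representation $(X,F)$ of $H$, the composition $\rho:=F\circ\pi$ is a representation of $G$ satisfying $\rho(g_k)=F(\pi(g_k))=F(0)=\mathrm{Id}_X$ for every $k$, because $g_k\in G_0=\ker\pi$. In particular $\sup_{k\geqslant 0}\Vert\rho(g_k)\Vert=1$, so partial boundedness along $(g_k)_{k\geqslant 0}$ is automatic. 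Moreover, every eigenvector of $F$ with unimodular character $\chi\in\widehat H$ remains an eigenvector of $\rho$ with unimodular character $\chi\circ\pi\in\widehat G$, and $\chi\mapsto\chi\circ\pi$ is injective from $\widehat H$ to $\widehat G$ by surjectivity of $\pi$. Hence $|\sigma_p(\rho)\cap\widehat G|\geqslant |\sigma_p(F)\cap\widehat H|$, and the theorem reduces to the following claim: every countably infinite discrete abelian group $H$ carries a representation on a separable complex Banach space with uncountable unimodular point spectrum.

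As a sanity check, the Pontryagin dual $\widehat H$ is itself uncountable. Indeed $\widehat H$ is a compact Hausdorff abelian group, and any countable compact Hausdorff group is finite: by the Baire category theorem some singleton is open, translation makes every singleton open, whence the group is discrete and hence finite by compactness. Combined with $H=\widehat{\widehat H}$ this would force $H$ to be finite, contradicting the standing assumption. So $\widehat H$ is a perfect compact metrizable abelian group and in particular contains a homeomorphic copy of the Cantor set.

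The main obstacle is constructing the representation $F$ itself, and I expect this to be the technical heart of the proof. Naive diagonal choices break down: multiplication operators on $L^2(\widehat H,\mathrm{Haar})$ or on $C(\widehat H)$ have empty point spectrum because $\widehat H$ has no atoms, while the diagonal action on $\ell^2(\Lambda)$ with $\Lambda\subseteq\widehat H$ forces $\Lambda$ to be countable if $X$ is to remain separable. To go further, I would adapt the construction underlying the hard direction of Theorem \ref{Zjamison} (the ``method of construction'' announced in the introduction, also used in Theorem \ref{characterization}), now performed along a Cantor subset of $\widehat H$ parametrized by a dyadic tree rather than along $\mathbb{T}$: attach to each branch of the tree a vector in a suitably weighted separable Banach space, and prescribe $F$ along a countable generating family of $H$ so that each branch vector becomes a simultaneous eigenvector with eigenvalue matching the corresponding character. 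The genuinely new difficulty, compared with Example \ref{exempleZinfini} where $H\cong\mathbb{Z}$ is free and a single operator $T$ suffices, is to verify that the operators attached to the generators satisfy exactly the commutation relations imposed by $H$, which need not be free abelian. Once this is done, the reduction of the first paragraph turns the representation of $H$ into the required representation of $G$ and shows that $(g_k)_{k\geqslant 0}$ is not a $G$-Jamison sequence.
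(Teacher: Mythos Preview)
Your reduction through $\pi:G\to H=G/G_0$ is correct and is in fact the dual of the paper's own move: the paper builds a translation representation on $G$ and then restricts to the closed span of the annihilator $G_0^\perp\cong\widehat H$, on which every $\rho(g_k)$ acts as the identity for exactly the reason you give. Your Baire-category argument that $\widehat H$ is uncountable is also fine and matches the paper's Haar-measure version.

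The genuine gap is that you never construct the representation $F$ of $H$ with uncountable unimodular point spectrum; you explicitly defer it as ``the technical heart'' and outline a Cantor-tree scheme that is both vague and overcomplicated. Your worry about enforcing the commutation relations of $H$ on operators prescribed on generators is a symptom of having chosen the wrong construction. The idea you are missing---and which the paper supplies---is the \emph{regular (translation) representation} on a suitably weighted $\ell^2$ space. Enumerate $H=\{h_n:n\geqslant 1\}$, set $m=\sum_{n}2^{-n}\delta_{h_n}$ and $w=\sum_{n}2^{-n}m^{\ast n}$. Then $\ell^2(H,w)$ is a separable Hilbert space (the Diracs $\delta_h$, $h\in H$, span it), each translation $F(h):f\mapsto f(\cdot+h)$ is bounded by the sub-invariance of $w$ (the paper's Lemma in Step~1), and since $\sum_{h}w(h)=1<\infty$ every character $\chi\in\widehat H$ lies in $\ell^2(H,w)$ and satisfies $F(h)\chi=\chi(h)\chi$. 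Thus $\sigma_p(F)\cap\widehat H=\widehat H$, which is uncountable. Translations by elements of $H$ automatically satisfy the group law, so there are no relations to check; no Cantor set or dyadic tree is needed. Plugging this $F$ into your first paragraph finishes the proof, and in fact yields a slightly cleaner argument than the paper's, which carries the weight construction on $G$ and only afterwards restricts to the span of $G_0^\perp$.
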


\begin{proof}
We need to construct a representation $(X,\rho)$ of the group $G$ which is partially bounded with respect to the sequence $(g_k)_{k\geqslant 0}$ and such that the unimodular point spectrum $\sigma_p(\rho)\cap \widehat{G}$ of $\rho$ is uncountable. 

The beginning of our construction is the same as in the proof of Theorem 2.1 of \cite{BadeaGrivaux2}. Our representation $\rho$ will be a \textit{backward shift} (or \textit{translation representation}) on some suitably chosen $\ell^2$ space over the group $G$. The first difference with the proof of Badea and Grivaux is that we have to built this $\ell^2$ space in such a way that $\rho$ maps every element of $G$ into a bounded linear (invertible) operator on this $\ell^2$ space. To do this, we need to built a positive weighted function $w$ on $G$ which has the property to be \textit{sub-invariant} (see Corollary \ref{subinvariant}).

In the first step of the proof, we built a weighted $\ell^2$ space $\ell^2(G,w)$ on the group $G$ where the weight $w$ is positive and such that we will be able to produce a representation $(\ell^2(G,w),\rho)$ essentially such that $\rho(g)$ is a bounded linear operator on $\ell^2(G,w)$ for every element $g$ of $G$.

\subsubsection*{Step 1 : the weight $w$, the space $\ell^2(G,w)$ and the representation $\rho$.}

The construction of the weight $w$ is inspired from the one used by Glasner and Weiss in \cite{GlasnerWeiss}. Given a countably infinite discrete group $G$ and a representation $(\mathcal{H},\rho)$ of $G$ on a separable Hilbert space $\mathcal{H}$, the authors proved in \cite{GlasnerWeiss} that the representation $(\mathcal{H},\rho)$ is \textit{universal} in the following sense: for every ergodic probability-preserving free action $\tilde{\rho}$ of $G$ on a probability space $(X,\mathcal{B},\mu)$, there exists a Borel probability measure $\nu$ with full support on $(\mathcal{H},\mathrm{Borel}(\mathcal{H}))$ which is $\rho$-invariant and such that the two actions of $\tilde{\rho}$ and $\rho$ on $(X,\mathcal{B},\mu)$ and $(\mathcal{H},\mathrm{Borel}(\mathcal{H}),\nu)$ respectively are isomorphic (see \cite{GlasnerWeiss} for more details).

Since our group $G$ is countable, we can fix an enumaration $G=\{h_n\,\vert\,n\geqslant 1\}$ of this group. For every positive integer $n$, we denote by $\delta_n$ the Dirac measure at point $h_n$, that is
$$
\forall g\in G,\,\qquad \delta_n(g)=\left\{\begin{array}{c l}
1 &\ \textrm{if }g=h_n\\
0 &\ \textrm{otherwise}
\end{array}\right.
$$
We then define a probability measure $m$ on $G$ by putting 
$$
m=\sum_{n=1}^{+\infty}2^{-n}\delta_n
$$
For every positive integer $n$, the notation 
$$m^{\ast n}=\underbrace{m\ast\dots\ast m}_{n\textrm{ times}}$$ 
stands for the $n$-fold interation convolution of the measure $m$. Let us now consider the weight $w$ on $G$ defined by
\begin{equation}\label{weight}
\forall g\in G,\qquad w(g)=\sum_{n=1}^{+\infty}2^{-n}m^{\ast n}(g)
\end{equation}
and the associated $\ell^2$ weighted space 
\begin{equation}\label{l2space}
\ell^2(G,w)=\bigg\{f : G \longrightarrow \mathbb{C}\,\bigg\vert\,\Vert f\Vert^2=\sum_{g\in G}^{}\vert f(g)\vert^2w(g)<+\infty\bigg\}
\end{equation}
Let us first notice that $\widehat{G}$ is a subset of $\ell^2(G,w)$ since $\sum_{g\in G}^{}w(g)<+\infty$ (this sum is equal to $1$). The next lemma shows that the \textit{backward shift representation} is well-defined. 

\begin{Lem}\label{representation}
For every element $g$ of $G$, the map 
$$
\rho(g) : \left\{\begin{array}{c c c}
\ell^2(G,w) & \longrightarrow & \ell^2(G,w)\\
f & \longmapsto & \big(h \mapsto f(h+g)\big)
\end{array}\right.
$$
is a well-defined bounded linear operator on the space $\ell^2(G,w)$.
\end{Lem}
\begin{proof}
Let $k$ be a positive integer. For every element $f$ of $\ell^2(G,w)$, we have 
\begin{align}\label{egalite}
\notag\Vert \rho(h_k)(f)\Vert^2 &=\sum_{g\in G}^{}\vert \rho(h_k)(f)(g)\vert^2 w(g)\\
\notag &=\sum_{g\in G}^{}\vert f(g+h_k)\vert^2 w(g)\\
&=\sum_{n=1}^{+\infty}2^{-n}\sum_{g\in G}^{}\vert f(g+h_k)\vert^2 m^{\ast n}(g)
\end{align}
But for every positive integer $n$, 
\begin{align}\label{inegalite}
\notag\sum_{g\in G}^{}\vert f(g+h_k)\rvert^2 m^{\ast n}(g)&=\sum_{(g_1,\dots, g_n)\in G^n}^{}\vert f(g_1+\dots +g_n+h_k)\rvert^2 m(g_1)\dots m(g_n)\\
\notag &=m(h_k)^{-1}\sum_{(g_1,\dots, g_n)\in G^n}^{}\vert f(g_1+\dots +g_n+h_k)\rvert^2 m(g_1)\dots m(g_n)m(h_k)\\
&\leqslant 2^k\sum_{(g_1,\dots, g_{n+1})\in G^{n+1}}^{}\vert f(g_1+\dots+g_{n+1})\vert^2 m(g_1)\dots m(g_{n+1})
\end{align}
It now follows from \eqref{egalite} and \eqref{inegalite} that
$$
\Vert\rho(h_k)(f)\Vert^2\leqslant 2^{k+1}\sum_{n=1}^{+\infty}2^{-(n+1)}\sum_{g\in G}^{}\vert f(g)\vert^2 m^{\ast (n+1)}(g)\leqslant 2^{k+1}\Vert f\Vert^2
$$
This proves that $\rho(h_k)$ is a bounded linear operator on the space $\ell^2(G,w)$ with $\Vert \rho(h_k)\Vert\leqslant \sqrt{2}^{k+1}$.
\end{proof}

According to Lemma \ref{representation}, the map 
$$
\rho : \left\{\begin{array}{c c c}
G & \longrightarrow & \mathcal{GL}(\ell^2(G,w))\\
g & \longmapsto & \rho(g)
\end{array}\right.\qquad \textrm{where}\qquad \rho(g) : \left\{\begin{array}{c c c}
G & \longrightarrow & \mathbb{C}\\
h & \longmapsto & f(h+g)
\end{array}\right.
$$
is a well-defined representation of $G$. Moreover, every character $\chi$ of $G$ is an eigenvalue of $\rho$ (with eigenvector $\chi$) since for every $(g,h)\in G^2$, 
$$
\rho(g)\chi(h)=\chi(g+h)=\chi(g)\chi(h)
$$
that is $\rho(g)\chi=\chi(g)\chi$. 

The sequel of the proof consists in making the representation $(\ell^2(G,w),\rho)$ partially bounded with respect to the sequence $(g_k)_{k\geqslant 0}$ and with an uncountable unimodular point spectrum.

\subsubsection*{Step 2 : the role of the annihilator group $G_0^\perp$ of $G_0$.}

Recall that $G_0$ is the subgroup of $G$ generated by the sequence $(g_k)_{k\geqslant 0}$. The annihilator of $G_0$ is the subgroup of $\widehat{G}$ defined by 
$$
G_0^\perp=\big\{\chi\in \widehat{G}\,\big|\,\forall g\in G_0,\ \chi(g)=1\big\}
$$
Let us then consider the closed subspace $\mathcal{H}$ of $\ell^2(G,w)$ spanned by this annihilator group, that is
$$
\mathcal{H}:=\overline{\textrm{span}}^{\Vert\cdot \Vert}\big\{\chi\,\big\vert\,\chi\in G_0^\perp\big\}
$$
and the representation $\rho_\mathcal{H}$ induced by $\rho$ on $\mathcal{H}$ that is
$$
\rho_{\mathcal{H}} : \left\{\begin{array}{c c c}
G & \longrightarrow & \mathcal{GL}(\mathcal{H})\\
g & \longmapsto & \rho(g)
\end{array}\right.
$$
In the next lemma, it is shown that $\rho_{\mathcal{H}}$ is a well-defined representation and its partial-boundedness with respect to the sequence $(g_k)_{k\geqslant 0}$ is established.
\begin{Lem}\label{bounded}
The representation $(\mathcal{H},\rho_\mathcal{H})$ is partially bounded with respect to the sequence $(g_k)_{k\geqslant 0}$.
\end{Lem}

\begin{proof}
For every character $\chi$ of $G$, we have the equality $\rho_{\mathcal{H}}(g)\chi=\chi(g)\chi$ for every element $g$ of $G$. In particular, $\mathcal{H}$ is an invariant subspace of $\rho_{\mathcal{H}}(g)$ for every $g$ of $G$.  More precisely,
$$
\forall f\in \mathcal{H},\ \forall g\in G_0,\qquad \rho_{\mathcal{H}}(g)(f)=f
$$
since $\chi(g)=1$ when $(\chi,g)$ belongs to $G_0^\perp\times G_0$ by definition of $G_0^\perp$. In particular, $\rho_{\mathcal{H}}(g_k)=\mathrm{Id}_{\mathcal{H}}$ for every non-negative integer $k$. It is then obvious that $\sup_{k\geqslant 0}\Vert \rho_{\mathcal{H}}(g_k)\Vert=1$ is finite and Lemma \ref{bounded} is proved.
\end{proof}

Recall that $G$ is a discrete group. It is well-known that any factor group of a discrete group is discrete. Hence, the factor group $G/G_0$ is discrete. Furthermore, it follows from the Pontryagin duality (which essentially says that the dual of $\widehat{G}$ of a locally compact abelian group $G$ is canonically isomorphic to $G$) that the topological dual group of a discrete abelian group is compact. In particular, the dual group $\widehat{G/G_0}$ is compact. For more informations on locally compact abelian group, see for instance \cite{Morris}.

It is now clear that $G_0^\perp$ is a subset of the unimodular point spectrum $\sigma_p(\rho_{\mathcal{H}})\cap \widehat{G}$ of $\rho_{\mathcal{H}}$. It is straightforward to check that this group is isomorphic to $\widehat{G/G_0}$, and it is assumed in Theorem \ref{nonjamison} that the quotient group $G/G_0$ is infinite. A consequence is that the dual group $\widehat{G/G_0}$ is uncountable. For let $\mu$ denote the Haar measure of the compact group $K:=\widehat{G/G_0}$ and let us assume that this compact group is at most countable. Then 
$$
\sum_{k\in K}^{}\mu(\{k\})=\mu(K)<+\infty
$$ 
and by translation invariance of $\mu$, any two elements of $K$ have the same measure. Then $K$ is finite and the Pontryagin duality implies that $G/G_0\simeq \widehat{K}$ is also finite which contradicts our assumption. As a consequence, the set $\sigma_p(\rho_{\mathcal{H}})\cap \widehat{G}$ is uncountable. The proof of Theorem \ref{nonjamison} is now complete since $(\mathcal{H},\Vert\cdot\Vert)$ is a separable Hilbert space and $(\mathcal{H},\rho_{\mathcal{H}})$ is a representation which is partially bounded with respect to the sequence $(g_k)_{k\geqslant 0}$ with an uncountable unimodular point spectrum.  
\end{proof}

At the end of the proof of Theorem \ref{nonjamison}, we use the fact that the dual group of an infinite discrete group is automatically uncountable. There exists a stronger result in this way due to Kakutani \cite{Kakutani}: if $\mathcal{G}$ is an infinite discrete group then $|\widehat{\mathcal{G}}|=2^{|\mathcal{G}|}$ where $| C|$ stands for the cardinality of the set $C$.

The fact that our backward shift representation $\rho$ makes every element of $G$ into a bounded linear operator on $\ell^2(G,w)$ is a consequence of the \textit{sub-invariance} of the weighted function $w$ defined in \eqref{weight}.

\begin{Cor}\label{subinvariant}
Let $h$ be an element of $G$. There exists a positive real number $C_h$ such that 
$$
\forall g\in G,\qquad w(g+h)\leqslant C_h w(g)
$$
\end{Cor}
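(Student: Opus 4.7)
The plan is to exploit the convolution structure of $w$ together with the fact that the measure $m$ has full support on $G$: since $m = \sum_{n \geqslant 1} 2^{-n}\delta_n$ with $\{h_n\mid n\geqslant 1\}=G$, we have $m(g)>0$ for every $g\in G$. The key observation is that, for every integer $n \geqslant 1$, every $g \in G$, and every fixed $h_0 \in G$, the associativity of convolution gives
$$m^{\ast(n+1)}(g) = \sum_{h' \in G} m^{\ast n}(g - h')\, m(h') \geqslant m^{\ast n}(g - h_0)\, m(h_0).$$
Applied with $h_0 = -h \in G$, this yields the pointwise comparison
$$m^{\ast n}(g + h) \leqslant m(-h)^{-1}\, m^{\ast(n+1)}(g) \qquad (g \in G,\ n \geqslant 1).$$

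Multiplying by $2^{-n}$, summing over $n \geqslant 1$ and re-indexing, the definition \eqref{weight} of $w$ yields
$$w(g+h) = \sum_{n \geqslant 1} 2^{-n} m^{\ast n}(g+h) \leqslant m(-h)^{-1} \sum_{n \geqslant 1} 2^{-n} m^{\ast(n+1)}(g) = \frac{2}{m(-h)} \sum_{k \geqslant 2} 2^{-k} m^{\ast k}(g) \leqslant \frac{2}{m(-h)}\, w(g).$$
So the constant $C_h := 2/m(-h)$ does the job, and if we write $-h=h_{j}$ for the index $j\geqslant 1$ from the enumeration fixed in Step~1, we may take explicitly $C_h = 2^{j+1}$.

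There is essentially no obstacle here beyond unpacking the definitions: the only fact to check is $m(-h) > 0$, and this is immediate from the construction of $m$. Note that this is the same mechanism (restricting the convolution sum to a single term) that underlies Lemma~\ref{representation}; the difference is that Corollary~\ref{subinvariant} is a genuinely \emph{pointwise} statement on $w$, whereas Lemma~\ref{representation} only used its integrated version against $|f|^2$.
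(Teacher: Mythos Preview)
Your proof is correct. Both your argument and the paper's hinge on the same convolution inequality $m^{\ast n}(g+h)\,m(-h)\leqslant m^{\ast(n+1)}(g)$, and both arrive at the identical constant $C_h=2^{j+1}$ where $-h=h_j$. The only difference is one of packaging: the paper first proves the \emph{integrated} version of this inequality as Lemma~\ref{representation} (the operator bound $\Vert\rho(h_\ell)f\Vert\leqslant\sqrt{2}^{\,\ell+1}\Vert f\Vert$) and then specializes it to the Dirac functions $f=\delta_k$ to recover the pointwise estimate, whereas you go straight to the pointwise inequality and sum. Your route is slightly more direct, since it avoids the detour through the operator norm; the paper's route has the virtue of making the corollary a literal one-line consequence of a lemma already proved. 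As you yourself remark at the end of your argument, the mechanism is the same in both cases.
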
 

\begin{proof}
The notations are the same as in the proof of Theorem \ref{nonjamison}. Let $k$ and $\ell$ be positive integers. We know from Lemma \ref{representation} that
$$
\forall f\in \ell^2(G,w),\qquad \Vert \rho(h_\ell)f\Vert\leqslant \sqrt{2}^{\ell+1}\Vert f\Vert
$$
Applying this inequality to the Dirac function $\delta_k$ of $\ell^2(G,w)$ defined by
$$
\forall g\in G,\qquad \delta_k(g)=\left\{\begin{array}{c l}
1 &\ \textrm{if }g=h_k\\
0 &\ \textrm{otherwise}
\end{array}\right.
$$
we get $\Vert \rho(h_\ell)\delta_k\Vert\leqslant \sqrt{2}^{\ell+1}\Vert \delta_k\Vert$ that is $w(h_k-h_\ell)\leqslant \sqrt{2}^{\ell+1}w(h_k)$ which proves Corollary \ref{subinvariant}.
\end{proof}

Since there is no $G$-Jamison sequences $(g_k)_{k\geqslant 0}$ such that the factor group $G/G_0$ is infinite according to Theorem \ref{nonjamison}, we now deal with the case of sequences of $G$ which generate \textit{big} subgroups $G_0$ of $G$. More precisely, we look for a characterization of $G$-Jamison sequences for which $G_0$ is a subgroup of $G$ of finite index in $G$.

\section{The theorem: characterization of Jamison sequences}

Let $G$ be a countably infinite discrete abelian group denoted by $G=\big\{h_k\,\big\vert\,k\geqslant 0\big\}$ and let $(g_k)_{k\geqslant 0}$ be a sequence of elements of $G$. As usual we denote by $G_0$ the subgroup of $G$ generated by $(g_k)_{k\geqslant 0}$ and we assume throughout this section that the quotient group $G/G_0$ is finite.

\begin{Prop}\label{wlog}
When $[G:G_0]<+\infty$, one can assume without loss of generality that $G=G_0$.
\end{Prop}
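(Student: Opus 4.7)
The plan is to prove the following equivalence, which justifies the ``without loss of generality'' reduction:
\[
(g_k)_{k\geqslant 0} \text{ is a } G\text{-Jamison sequence} \iff (g_k)_{k\geqslant 0} \text{ is a } G_0\text{-Jamison sequence}.
\]
This makes sense because, when $[G:G_0]$ is finite and $G$ is countably infinite, $G_0$ is itself a countably infinite discrete abelian group, and $(g_k)_{k\geqslant 0}$ is a generating sequence of $G_0$ by definition.

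For the direction $G_0$-Jamison $\Rightarrow$ $G$-Jamison, I would take any representation $(X,\rho)$ of $G$ partially bounded with respect to $(g_k)_{k\geqslant 0}$ and consider its restriction $\rho_0 := \rho_{\restriction_{G_0}}$, which is a representation of $G_0$ on the same Banach space, partially bounded with respect to $(g_k)_{k\geqslant 0}$ with the same bound. By hypothesis, $\sigma_p(\rho_0) \cap \widehat{G_0}$ is at most countable. Any eigenvector of $\rho$ for a character $\chi\in \widehat{G}$ is automatically an eigenvector of $\rho_0$ for the restricted character $\chi_{\restriction_{G_0}}$, so restriction yields a well-defined map $\sigma_p(\rho)\cap \widehat{G} \to \sigma_p(\rho_0)\cap \widehat{G_0}$ whose fibers have cardinality at most $|G_0^\perp| = |\widehat{G/G_0}| = [G:G_0] < +\infty$, since two extensions of the same character of $G_0$ differ by an element of the annihilator $G_0^\perp$. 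It follows that $\sigma_p(\rho) \cap \widehat{G}$ is at most countable, hence $(g_k)_{k\geqslant 0}$ is a $G$-Jamison sequence.

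The other direction is the more technical one and requires manufacturing, from a partially bounded representation of $G_0$, a partially bounded representation of $G$ whose point spectrum controls that of $\rho_0$. The natural tool is the \emph{induced representation}. Setting $n := [G:G_0]$, I would fix coset representatives $a_1 = 0, a_2, \ldots, a_n$ of $G/G_0$ and work on $Y := X^n$ with the $\ell^2$-norm, which is separable and infinite-dimensional. For each $g\in G$, there is a unique permutation $\sigma_g$ of $\{1,\ldots,n\}$ and a unique family $(\gamma_g(j))_{1\leqslant j\leqslant n}$ in $G_0$ with $a_j + g = a_{\sigma_g(j)} + \gamma_g(j)$, and I would define $\rho(g)\in \mathcal{GL}(Y)$ by $(\rho(g) f)(a_j) := \rho_0(\gamma_g(j))\, f(a_{\sigma_g(j)})$. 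A routine verification shows that $\rho$ is a group homomorphism; moreover, for $g\in G_0$ one has $\sigma_g = \mathrm{id}$ and $\gamma_g(j) = g$, so $\rho(g)$ is the diagonal operator $\rho_0(g)\oplus \cdots\oplus \rho_0(g)$ on $Y$ and $\Vert \rho(g)\Vert = \Vert \rho_0(g)\Vert$. In particular $(Y,\rho)$ is partially bounded with respect to $(g_k)_{k\geqslant 0}$, so the $G$-Jamison hypothesis yields countability of $\sigma_p(\rho)\cap \widehat{G}$.

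It remains to transfer this to $\sigma_p(\rho_0) \cap \widehat{G_0}$. Given $\psi\in \sigma_p(\rho_0) \cap \widehat{G_0}$ with eigenvector $x\in X\setminus\{0\}$, I would extend $\psi$ to a character $\chi\in \widehat{G}$ using the fact that $\mathbb{T}$ is a divisible abelian group, hence an injective object in the category of abelian groups, so that every character of the subgroup $G_0$ extends to $G$. A direct computation then shows that $f_\chi$ defined by $f_\chi(a_j) := \chi(a_j)\, x$ is a nonzero eigenvector of $\rho$ with eigenvalue $\chi$; hence restriction yields a surjection $\sigma_p(\rho)\cap \widehat{G} \twoheadrightarrow \sigma_p(\rho_0)\cap \widehat{G_0}$, and countability of the former forces countability of the latter. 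The main obstacle is the bookkeeping for the induced representation and the simultaneous verification of partial boundedness and of the eigenvalue correspondence; the key external input is the divisibility of $\mathbb{T}$, which guarantees that every eigenvalue of $\rho_0$ lifts to an eigenvalue of $\rho$.
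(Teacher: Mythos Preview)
Your argument is correct, but the paper's proof takes a much shorter and conceptually different route. Rather than changing the group and proving the equivalence ``$(g_k)$ is $G$-Jamison $\iff$ $(g_k)$ is $G_0$-Jamison'' via restriction and induction of representations, the paper keeps the group $G$ fixed and instead modifies the \emph{sequence}: it adjoins to $(g_k)_{k\geqslant 0}$ a finite set of coset representatives of $G_0$ in $G$, obtaining a new sequence $(\tilde g_k)_{k\geqslant 0}$ that now generates all of $G$. Since a representation is automatically bounded on any finite set, partial boundedness with respect to $(g_k)$ and with respect to $(\tilde g_k)$ are equivalent, so $(g_k)$ is a $G$-Jamison sequence if and only if $(\tilde g_k)$ is. This reduces to the generating case in one line.

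Your approach, by contrast, is more structural: it exploits the induced-representation construction and the divisibility of $\mathbb{T}$ to transfer eigenvalues back and forth between $G$ and $G_0$. This has the advantage of giving an intrinsic statement (the Jamison property of $(g_k)$ depends only on the subgroup it generates, not on the ambient group, as long as the index is finite), and it avoids altering the sequence. The cost is the bookkeeping you flagged: verifying that the induced representation is a homomorphism, that it is partially bounded (because $\rho(g)$ is block-diagonal for $g\in G_0$), and that every $\psi\in\sigma_p(\rho_0)\cap\widehat{G_0}$ lifts to some $\chi\in\sigma_p(\rho)\cap\widehat{G}$. All of these checks go through as you describe, so the argument is sound; it is simply heavier machinery than the paper's one-sentence trick of appending finitely many generators.
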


\begin{proof}
Let $n=[G:G_0]<+\infty$ and let $\{\ell_1,\dots,\ell_n\}$ be an enumeration of the group $G/G_0$. Then $G$ can be written as the disjoint union $G=\displaystyle \bigcup_{k=1}^{n}(\ell_k+G_0)$ and the new sequence $(\tilde{g}_k)_{k\geqslant 0}$ which is obtained by putting together $(g_k)_{k\geqslant 0}$ with the \textit{finite} sequence $(h_k)_{1\leqslant k\leqslant n}$ is such that $\tilde{G}_0=G$ where $\tilde{G}_0$ is the subgroup of $G$ generated by the sequence $(\tilde{g}_k)_{k\geqslant 0}$. Furthermore $(g_k)_{k\geqslant 0}$ is a $G$-Jamison sequence if and only if $(\tilde{g}_k)_{k\geqslant 0}$ has the same property.
\end{proof}

According to Proposition \ref{wlog} we can suppose in the sequel that $(g_k)_{k\geqslant 0}$ is a generate sequence of the group $G$. In particular, the map $d_{(g_k)}$ defined in \eqref{distance} is now a distance on $\widehat{G}$. The characterization of $G$-Jamison sequences runs as follows.

\begin{Theo}\label{characterization}
Let $(g_k)_{k\ge 0}$ be a sequence of elements of a countably infinite discrete abelian group $G$ denoted by $G=\big\{h_k\,\big\vert\,k\geqslant 0\big\}$. We assume that $(g_k)_{k\geqslant 0}$ is a generate sequence of the group $G$. Then the following assertions are equivalent$:$
\begin{enumerate}
\item[$(1)$] $(g_k)_{k\geqslant 0}$ is a $G$-Jamison sequence$;$
\item[$(2)$] for every uncountable subset $K$ of $\widehat{G}$, the metric space $(K,d_{(g_k)})$ is non-separable$;$
\item[$(3)$] for every uncountable subset $K$ of $\widehat{G}$, there exists $\varepsilon>0$ such that $K$ contains an uncountable $\varepsilon$-separated family for the distance $d_{(g_k)};$
\item[$(4)$] there exists $\varepsilon>0$ such that every uncountable subset $K$ of $\widehat{G}$ contains an uncountable $\varepsilon$-separated family for the distance $d_{(g_k)};$
\item[$(5)$] there exists $\varepsilon>0$ such that any two distinct characters of $G$ are $\varepsilon$-separated for the distance $d_{(g_k)}:$
$$
\forall (\chi,\varphi)\in \widehat{G}\times \widehat{G},\qquad \chi\ne \varphi\ \Longrightarrow\ \sup_{k\geqslant 0}|\chi(g_k)-\varphi(g_k)|\geqslant \varepsilon
$$
\end{enumerate}
\end{Theo}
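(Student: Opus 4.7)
The strategy is to establish the cyclic chain of implications $(5) \Rightarrow (4) \Rightarrow (3) \Rightarrow (2) \Rightarrow (1) \Rightarrow (5)$. The first three steps are essentially formal: under $(5)$, the whole set $\widehat{G}$ is an $\varepsilon$-separated family for $d_{(g_k)}$, so any uncountable $K \subset \widehat{G}$ is an uncountable $\varepsilon$-separated subfamily, giving $(4)$; passing from $(4)$ to $(3)$ is trivial; and $(3) \Rightarrow (2)$ is the standard metric-space fact that an uncountable $\varepsilon$-separated family inside $K$ produces uncountably many pairwise disjoint open balls of radius $\varepsilon/2$, which cannot all be met by a single countable subset, so $(K, d_{(g_k)})$ is non-separable.

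For $(2) \Rightarrow (1)$, I would reuse the key inequality from the proof of Proposition \ref{conditionsuffisante}. Let $(X, \rho)$ be a representation of $G$ partially bounded with respect to $(g_k)_{k\geqslant 0}$, write $M = \sup_{k\geqslant 0}\Vert \rho(g_k)\Vert < +\infty$, and for each $\chi \in \sigma_p(\rho) \cap \widehat{G}$ fix a normalized eigenvector $e_\chi$. Decomposing $\rho(g_k)(e_\chi - e_\varphi) = \chi(g_k)(e_\chi - e_\varphi) + (\chi(g_k) - \varphi(g_k)) e_\varphi$ and taking the supremum over $k$ yields
$$
d_{(g_k)}(\chi, \varphi) \leqslant (M+1)\Vert e_\chi - e_\varphi\Vert.
$$
Hence the injection $\chi \mapsto e_\chi$ has an $(M+1)$-Lipschitz inverse on its image $\{e_\chi\} \subset X$, which is separable as a subset of the separable metric space $X$. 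Pulling a countable dense subset of $\{e_\chi\}$ back through this Lipschitz inverse produces a countable $d_{(g_k)}$-dense subset of $\sigma_p(\rho) \cap \widehat{G}$, so $(2)$ forces this unimodular point spectrum to be at most countable, proving that $(g_k)_{k\geqslant 0}$ is a $G$-Jamison sequence.

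The heart of the proof is $(1) \Rightarrow (5)$, established by contrapositive along the lines of the Badea--Grivaux construction in \cite{BadeaGrivaux2}. Assuming $(5)$ fails, I can extract a sequence $(\chi_n)_{n\geqslant 1}$ in $\widehat{G} \setminus \{\mathbf{1}\}$ with $d_{(g_k)}(\chi_n, \mathbf{1})$ decreasing to zero as fast as desired. The plan is to build a separable Hilbert space of the form $\ell^2(\mathbb{N}, w)$ for an appropriate weight $w$, together with a representation $\rho : G \to \mathcal{GL}(\ell^2(\mathbb{N}, w))$, such that (i) each operator $\rho(g_k)$ is a perturbation of the identity of controlled norm, yielding $\sup_{k\geqslant 0}\Vert \rho(g_k) \Vert < +\infty$; and (ii) for each $\chi$ in an uncountable family of characters obtained from infinite products or limits of the $\chi_n$'s, the eigenvalue equations $\rho(g) e_\chi = \chi(g) e_\chi$ (for all $g \in G$) admit an explicit square-summable solution $e_\chi$. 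Such a representation then has uncountable $\sigma_p(\rho)\cap\widehat{G}$ and contradicts $(1)$.

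The main obstacle will be adapting the single-operator Badea--Grivaux construction to a full representation of $G$. Unlike the case $G = \mathbb{Z}$, where one only has to define a single operator $T$ and let its iterates encode the whole group action, here the operators $\rho(g)$ must be coherently defined for every $g \in G$, respect the group law, remain uniformly bounded only along the subsequence $(g_k)$, and still admit uncountably many $\ell^2$-eigenvectors indexed by distinct characters of $G$. The weight $w$ and the action of $\rho$ on the basis will have to be tailored to the arithmetic of $(g_k)$ inside $G$, presumably via a decomposition of each $\rho(g_k)$ into a diagonal part carrying the approximate eigenvalues and a shift-like perturbation whose norm is controlled precisely by the smallness of $d_{(g_k)}(\chi_n, \mathbf{1})$.
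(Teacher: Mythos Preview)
Your chain $(5)\Rightarrow(4)\Rightarrow(3)\Rightarrow(2)$ is fine, and your $(2)\Rightarrow(1)$ is correct and in fact slightly cleaner than the paper's route: the paper closes the easy direction via $(5)\Rightarrow(1)$ using Proposition~\ref{conditionsuffisante}, whereas your Lipschitz-inverse argument shows directly that the unimodular point spectrum of any partially bounded representation is $d_{(g_k)}$-separable, so $(2)$ forces it to be countable. This reorganisation is harmless and arguably more natural.

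The real issue is your hard implication $(1)\Rightarrow(5)$. The paper does \emph{not} attack this directly: it splits the work into $(3)\Rightarrow(5)$ (a Cantor-tree construction producing, from a sequence $\chi_n\to\mathbf 1$, an uncountable set $K\subset\widehat G$ in which every $\varepsilon$-separated family is finite, hence $(K,d_{(g_k)})$ is separable) and $(1)\Rightarrow(2)$ (given an uncountable $d_{(g_k)}$-separable $K$, build a bad representation). Your single step $\neg(5)\Rightarrow\neg(1)$ necessarily bundles both of these together; you hint at the first when you speak of ``infinite products or limits of the $\chi_n$'s'', but the proposal does not acknowledge that one must first manufacture an uncountable \emph{separable} family of characters before any space-and-representation construction can hope to yield a separable $X$.

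More importantly, the construction you sketch (operators on $\ell^2(\mathbb N,w)$ that are ``diagonal plus a shift-like perturbation'') is the single-operator Badea--Grivaux machine and does not generalise in the obvious way to an action of an arbitrary countable abelian $G$: making such perturbations of the identity commute and satisfy the full group law is exactly the obstacle you flag, and the paper does not solve it that way. Instead it uses the \emph{translation} representation on $\ell^2(G,w)$, where $w$ is a convolution-series weight \`a la Glasner--Weiss (so that each $\rho(g)$ is automatically bounded and the group law is built in), then imposes a new norm $\Vert\cdot\Vert_\ast$ involving suprema of products $\prod_i(\rho(g_{k_i})-\mathrm{Id})$ to force $\sup_k\Vert\rho(g_k)\Vert_\ast\leqslant 3$, and finally restricts to the closed span of $K$ and proves separability via continuity of the eigenvector field $\chi\mapsto\chi$ from $(K,d_{(g_k)})$ to $(X_\ast,\Vert\cdot\Vert_\ast)$. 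None of these three ingredients (sub-invariant weight, renorming, continuity of the eigenvector field) appears in your proposal, and without them the plan for $(1)\Rightarrow(5)$ remains a wish rather than a proof.
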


\begin{proof}
The implication $(5) \Longrightarrow (1)$ is exactly Proposition \ref{conditionsuffisante} when $G=G_0$ since condition $(2)$ of Proposition \ref{conditionsuffisante} is clearly equivalent to condition $(5)$ of Theorem \ref{characterization} in this case. 

Furthermore we have the obvious implications $(5) \Longrightarrow (4) \Longrightarrow (3) \Longrightarrow (2)$ whereas $(2) \Longrightarrow (3)$ follows from the general theory of metric spaces. 

We now prove that $(3) \Longrightarrow (5)$. Suppose by contradiction that $(5)$ is not satisfied. Then one can find by induction a sequence $(\chi_n)_{n\geqslant 1}$ of elements of $\widehat{G}\setminus\{\mathbf{1}\}$ such that $d_{(g_k)}(\chi_1,\mathbf{1})<4^{-1}$ and
\begin{equation}
\forall n\geqslant 2,\qquad d_{(g_k)}(\chi_n,\mathbf{1})<4^{-n}d_{(g_k)}(\chi_{n-1},\overline{\chi_{n-1}})
\label{conditionchi}
\end{equation}
Since $\chi_{n-1}\in \widehat{G}\setminus\{\mathbf{1}\}$, there exists a non-negative integer $k_0$ such that $\chi_{n-1}(g_{k_0})\ne 1$. Then 
$$
|\chi_{n-1}(g_{k_0})-1|\leqslant d_{(g_k)}(\chi_{n-1},\mathbf{1})<\frac{1}{4}
$$
It follows that $\chi_{n-1}(g_{k_0})\ne -1$ and then $\chi_{n-1}\ne \overline{\chi_{n-1}}$. This implies that $d_{(g_k)}(\chi_{n-1},\overline{\chi_{n-1}})>0$ which proves the existence of $\chi_n\in \widehat{G}\setminus\{\mathbf{1}\}$ satisfying condition \eqref{conditionchi}. Moreover this condition implies that the sequence $(\chi_n)_{n\geqslant 1}$ is decreasing. We are now going to construct an uncountable subset $K$ of $\widehat{G}$ such that for every positive real number $\varepsilon$, every $\varepsilon$-separated family of $K$ is finite. For any finite sequence $(s_1,\dots,s_n)\in \{0,1\}^n$, we construct a character $\psi_{(s_1,\dots, s_n)}$ of $G$ in the following way. Let $\psi_{(0)}=\chi_1$ and $\psi_{(1)}=\overline{\chi_1}$. We have
$$
d_{(g_k)}(\psi_{(0)},\psi_{(1)})=d_{(g_k)}(\chi_1,\overline{\chi_1})>0
$$
Then we consider the characters 
$$
\psi_{(0,0)}=\psi_{(0)}\chi_2,\ \psi_{(0,1)}=\psi_{(0)}\overline{\chi_2}\qquad \textrm{and}\qquad \psi_{(1,0)}=\psi_{(1)}\chi_2,\ \psi_{(1,1)}=\psi_{(1)}\overline{\chi_2} 
$$
For $s_2\in \{0,1\}$, we have $d_{(g_k)}(\psi_{(0)},\psi_{(0,s_2)})=d_{(g_k)}(\chi_2,\mathbf{1})$ and then
$$
d_{(g_k)}(\psi_{(0)},\psi_{(0,s_2)})<\frac{1}{4^2}d_{(g_k)}(\chi_1,\overline{\chi_1})
$$
and in the same way
$$
d_{(g_k)}(\psi_{(1)},\psi_{(1,s_2)})<\frac{1}{4^2}d_{(g_k)}(\chi_1,\overline{\chi_1})
$$
Moreover 
$$
d_{(g_k)}(\psi_{(0,0)},\psi_{(0,1)})=d_{(g_k)}(\psi_{(1,0)},\psi_{(1,1)})=d_{(g_k)}(\chi_2,\overline{\chi_2})
$$
If $\psi_{(s_1,\dots,s_{n-1})}$ has already been defined then we set 
$$
\psi_{(s_1,\dots, s_{n-1},0)}=\psi_{(s_1,\dots, s_{n-1})}\chi_n\qquad \textrm{and}\qquad \psi_{(s_1,\dots, s_{n-1},1)}=\psi_{(s_1,\dots, s_{n-1})}\overline{\chi_n}
$$
Then 
\begin{equation}
d_{(g_k)}(\psi_{(s_1,\dots, s_{n-1})},\psi_{(s_1,\dots, s_n)})<\frac{1}{4^n}d_{(g_k)}(\chi_{n-1},\overline{\chi_{n-1}})
\label{limiteexiste}
\end{equation}
and 
$$
d_{(g_k)}(\psi_{(s_1,\dots,s_{n-1},0)},\psi_{(s_1,\dots, s_{n-1},1)})=d_{(g_k)}(\chi_n,\overline{\chi_n})
$$
We now define a character $\psi_s$ of $G$ for any infinite sequence $s=(s_n)_{n\geqslant 1}$ of zeroes and ones by putting
$$
\psi_s=\lim_{n\to +\infty}\psi_{s_1,\dots,s_n}
$$
The limit exists according to \eqref{limiteexiste} and for every positive integer $p$, we have the equality
\begin{equation}
\psi_s=\psi_{(s_1,\dots,s_p)}\prod_{j\geqslant p}\psi_{(s_1,\dots,s_{j+1})}\overline{\psi_{(s_1,\dots, s_j)}}
\label{caractereproduit}
\end{equation}
We now prove that the map $s \longmapsto \psi_s$ from $2^\omega$ to $\widehat{G}$ is one-to-one. Let $s=(s_1,\dots, s_{p-1},0,s_{p+1},\dots)$ and $s'=(s_1,\dots, s_{p-1},1,s_{p+1}',\dots)$ be two distinct elements of $2^\omega$. Using the representation \eqref{caractereproduit} of $\psi_s$ and $\psi_{s'}$, we get that $d_{(g_k)}(\psi_s,\psi_{s'})$ is equal to the supremum over the non-negative integers $k$ of
\begin{align*}
\Bigg| \psi_{(s_1,\dots,s_{p-1},0)}(g_k)\prod_{j\geqslant p}\psi_{(s_1,\dots,s_{j+1})}(g_k)&\overline{\psi_{(s_1,\dots, s_j)}(g_k)}\\
&-\psi_{(s_1,\dots,s_{p-1},1)}(g_k)\prod_{j\geqslant p}\psi_{(s_1',\dots,s_{j+1}')}(g_k)\overline{\psi_{(s_1',\dots, s_j')}(g_k)}\Bigg|
\end{align*}
Using the easy inequality $|\lambda_1\lambda_2-\mu_1\mu_2|\geqslant |\lambda_1-\mu_1|-|\lambda_2-\mu_2|$ which is valid for every complex numbers $\lambda_1,\mu_1,\lambda_2,\mu_2$ of modulus one, we find that this supremum is greater than
\begin{align*}
d_{(g_k)}(\psi_{(s_1,\dots, s_{p-1},0)}&,\psi_{(s_1,\dots, s_{p-1},1)})\\
&-\sup_{k\geqslant 0}\Bigg|\prod_{j\geqslant p}\psi_{(s_1,\dots, s_{j+1})}(g_k)\overline{\psi_{(s_1,\dots, s_j)}(g_k)}\overline{\psi_{(s_1',\dots, s_{j+1}')}(g_k)}\psi_{(s_1',\dots, s_j')}(g_k) -1\Bigg|
\end{align*}
Since a character takes it values in the unit circle, we have
\begin{align*}
\Bigg|\prod_{j\geqslant p}\psi_{(s_1,\dots, s_{j+1})}(g_k)&\overline{\psi_{(s_1,\dots, s_j)}(g_k)}\overline{\psi_{(s_1',\dots, s_{j+1}')}(g_k)}\psi_{(s_1',\dots, s_j')}(g_k) -1\Bigg|\\
&\leqslant \sum_{j\geqslant p}^{}d_{(g_k)}(\psi_{(s_1,\dots, s_{j+1})},\psi_{(s_1,\dots,s_j)})
+\sum_{j\geqslant p}^{}d_{(g_k)}(\psi_{(s_1',\dots, s_{j+1}')},\psi_{(s_1',\dots,s_j')})
\end{align*}
It follows from these inequalities that
\begin{align*}
d_{(g_k)}(\psi_s,\psi_{s'})&\geqslant d_{(g_k)}(\chi_p,\overline{\chi_{p}})-2\sum_{j\geqslant p}^{}4^{-j-1}d_{(g_k)}(\chi_j,\overline{\chi_j})\\
&\geqslant \Bigg(1-2\sum_{j\geqslant p}^{}4^{-j-1}\Bigg)d_{(g_k)}(\chi_p,\overline{\chi_{p}})
\end{align*}
since the sequence $\big(d_{(g_k)}(\chi_n,\overline{\chi_n})\big)_{n\geqslant 1}$ is decreasing. Hence 
$$
d_{(g_k)}(\psi_s,\psi_{s'})\geqslant \frac{5}{6}d_{(g_k)}(\chi_p,\overline{\chi_{p}})>0
$$
This proves that $K=\{\psi_s\,|\,s\in 2^\omega\}$ is an uncountable subset of $\widehat{G}$. Moreover, the same computation as above shows that
$$
d_{(g_k)}(\psi_s,\psi_{s'})\leqslant \frac{7}{6}d_{(g_k)}(\chi_p,\overline{\chi_p})
$$ 
for any positive integer $p$ and every elements $s$ and $s'$ of $2^\omega$ whose components coincide until the index $p$. Let $\varepsilon>0$. There exists a positive integer $p$ such that 
$$
d_{(g_k)}(\psi_s,\psi_{s'})\leqslant \frac{7}{6}d_{(g_k)}(\chi_p,\overline{\chi_p})\leqslant \varepsilon
$$ 
for arbitrary $s$ and $s'$ of $2^\omega$ such that $(s_1,\dots,s_p)=(s_1',\dots, s_p')$. We deduce from this that if $\psi_s$ and $\psi_{s'}$ are $\varepsilon$-separated for the distance $d_{(g_k)}$ then at least one the first $p$ coordinates of $s$ and $s'$ differs. If $\{\psi_{s_1},\dots, \psi_{s_m}\}$ is a family of characters of $G$ belonging to $K$ which are $\varepsilon$-separated for the distance $d_{(g_k)}$ then for every $(i,j)\in \{1,\dots, m\}^2$, we must have 
$$
(s_i(1),\dots, s_i(p))\ne(s_j(1),\dots,s_j(p))
$$
There is $2^p$ finite sequences of $\{0,1\}^p$, hence $m\leqslant 2^p$. Eventually every $\varepsilon$-separated of $K$ (for the distance $d_{(g_k)}$) is finite. This conclude the proof of $(3) \Longrightarrow (5)$.

It remains to prove that $(1)$ implies $(2)$. Let us assume that $(2)$ is not satisfied. Then there exists an uncountable subset $K$ of $\widehat{G}$ such that $(K,d_{(g_k)})$ is a separable metric space and we have to produce a representation $(X,\rho)$ of the group $G$ which is partially bounded with respect to the sequence $(g_k)_{k\geqslant 0}$ and such that the unimodular point spectrum $\sigma_p(\rho)\cap \widehat{G}$ of $\rho$ is uncountable. The beginning of the construction is the same as in the proof of Theorem \ref{nonjamison}: let us consider the representation $(\ell^2(G,w),\rho)$ where $\ell^2(G,w)$ is defined in \eqref{l2space} and \eqref{weight} and where $\rho$ is the translation representation. Our first task is to make our representation partially bounded with respect to the sequence $(g_k)_{k\geqslant 0}$. We use for this a renorming process of the space $\ell^2(G,w)$ which is inspired from the one used by Badea and Grivaux in the proof of Theorem 2.1 of \cite{BadeaGrivaux2}. The new norm will intrinsically depends on the sequence $(g_k)_{k\geqslant 0}$. 

\subsubsection*{Step 1: making $\rho$ partially bounded with respect to $(g_k)_{k\geqslant 0}$.}

We endow the space $\ell^2(G,w)$ with the new norm $\Vert\cdot \Vert_\ast$ defined by
\begin{equation}
\Vert f\Vert_*:=\max\Bigg(\Vert f\Vert,\sup_{j\geqslant 0} 2^{-j-1}\sup_{k_0,\dots,k_j\geqslant 0}\Bigg\Vert \prod_{i=0}^{j}\big(\rho(g_{k_i})-\mathrm{Id}\big)f\Bigg\Vert\Bigg)
\label{norm}
\end{equation}
for every element $f$ of $\ell^2(G,w)$ and we set $X:=\big\{f\in \ell^2(G,w)\,\big|\,\Vert f \Vert_\ast<+\infty\}$. Let us denote by $\rho_X$ the representation induced by $\rho$ on this space that is
$$
\rho_X : \left\{\begin{array}{c c c}
G & \longrightarrow & \mathcal{GL}(X)\\
g & \longmapsto & \rho(g)
\end{array}\right.
$$
The norm $\Vert\cdot\Vert_\ast$ makes the representation $\rho_X$ with the required boundedness property.

\begin{Lem}
The representation $(X,\rho_X)$ of the group $G$ is partially bounded with respect to the sequence $(g_k)_{k\geqslant 0}$. More precisely, 
$$
\sup_{k\geqslant 0}\Vert \rho(g_k)\Vert\leqslant 3
$$
\end{Lem}

\begin{proof}
For every element $f$ of $X$ and every non-negative integers $p$, $j$ and $k_0,\dots, k_j$, we have
\begin{align*}
2^{-j-1}\Bigg\Vert \prod_{i=0}^{j}\big(\rho(g_{k_i})-\mathrm{Id}\big)\rho(g_p)f\Bigg\Vert \leqslant 2\times 2^{-j-2}\Bigg\Vert \prod_{i=0}^{j}\big(&\rho(g_{k_i})-\mathrm{Id}\big)(\rho(g_p)-\mathrm{Id})f\Bigg\Vert\\
&+2^{-j-1}\Bigg\Vert \prod_{i=0}^{j}\big(\rho(g_{k_i})-\mathrm{Id}\big)f\Bigg\Vert
\end{align*}
and in the same way $\Vert\rho(g_p)(f)\Vert\leqslant 2\times 2^{-1}\Vert (\rho(g_p)-\mathrm{Id})(f)\Vert+ \Vert f\Vert$.
It follows from these inequalities that $\Vert \rho(g_p)(f)\Vert_\ast\leqslant 2\Vert f\Vert_\ast+\Vert f\Vert_\ast$ that is to say $\Vert \rho(g_p)f\Vert_\ast\leqslant 3\Vert f\Vert_\ast$ and the lemma is proved.
\end{proof}
We now have to make the space $(X,\Vert\cdot \Vert_\ast)$ separable and such that the unimodular point spectrum of the induced representation is uncountable.

\subsubsection*{Step 2: making the space $X$ separable and the unimodular point spectrum uncountable.}

The presence of the weights $2^{-j-1}$ in the norm $\Vert \cdot\Vert_\ast$ defined in \eqref{norm} ensures that every character $\chi$ of $G$ belongs to $X$. Moreover every element $\chi$ of $\widehat{G}$ is a unimodular eigenvalue of $\rho_X$ (with associated eigenvector $\rho$) since 
$$
\forall (g,h,\chi)\in G^2\times \widehat{G},\qquad \rho(g)\chi(h)=\chi(g+h)=\chi(g)\chi(h)
$$
We now use our assumption: since condition $(2)$ is not satisfied, there exists an uncountable subset $K$ of $\widehat{G}$ such that $(K,d_{(g_k)})$ is a separable metric space. In order to make the space $X$ separable, let us consider the new space 
$$
X_\ast:=\overline{\mathrm{span}}^{\Vert\cdot \Vert_\ast}\big\{\chi\,\big|\,\chi\in K\big\}
$$
which is equipped with the norm $\Vert\cdot\Vert_\ast$ and the representation $\rho_{X_\ast}$ induced by $\rho$ on the space $X_\ast$. The separability of $X_\ast$ follows from the above lemma.

\begin{Lem}\label{continuity}
The eigenvector field $E : \left\{\begin{array}{c c c}
K & \longrightarrow & X_\ast\\
\chi & \longmapsto & \chi
\end{array}\right.$ is continuous.
\end{Lem}

\begin{proof}
Let $\chi$ and $\varphi$ be two elements of $K$. The norm $\Vert \chi-\varphi\Vert_\ast$ is equal to the maximum between $\Vert \chi-\varphi\Vert$ and
$$
\sup_{j\geqslant 0}2^{-j-1}\sup_{k_0,\dots, k_j\geqslant 0}\Bigg\Vert\prod_{i=0}^{j}\big(\chi(g_{k_i})-1\big)\chi-\prod_{i=0}^{j}\big(\varphi(g_{k_i})-1\big)\varphi\Bigg\Vert
$$
For every non-negative integers $j$ and $k_0,\dots, k_j$, we have
\begin{align*}
\Bigg\Vert\prod_{i=0}^{j}\big(\chi(g_{k_i})-1\big)\chi-\prod_{i=0}^{j}\big(\varphi(g_{k_i})-1\big)\varphi\Bigg\Vert
&\leqslant d_j(\chi,\mathbf{1})\Vert \chi- \varphi\Vert+ d_j(\chi,\varphi)\Vert \varphi\Vert
\end{align*}
where 
$$
d_j(\chi,\varphi):=\sup_{k_0,\dots, k_j\geqslant 0} \Bigg\vert\prod_{i=0}^{j}\big(\chi(g_{k_i})-1\big)-\prod_{i=0}^{j}\big(\varphi(g_{k_i})-1\big)\Bigg\vert
$$
It is rather easy to prove that 
\begin{equation}\label{dj}
d_j(\chi,\varphi)\leqslant 2^jd_{(g_k)}(\chi,\varphi)+2d_{j-1}(\chi,\varphi)
\end{equation}
This is an easy consequence of the following identity which is valid for every positive integer $j$ and every non-negative integers $k_0,\dots, k_j$:
\begin{align*}
\prod_{i=0}^{j}&\big(\chi(g_{k_i})-1\big)-\prod_{i=0}^{j}\big(\varphi(g_{k_i})-1\big)\\
&=\big(\chi(g_{k_0})-
\varphi(g_{k_0})\big)\prod_{i=1}^{j}\big(\chi(g_{k_i})-1\big)+\big(\varphi(g_{k_0})-1\big)\Bigg(\prod_{i=1}^{j}\big(\chi(g_{k_i})-1\big)&-\prod_{i=1}^{j}\big(\varphi(g_{k_i})-1\big)\Bigg)
\end{align*}
By an induction argument, it follows from \eqref{dj} that 
$$
d_j(\chi,\varphi)\leqslant (j+1)2^jd_{(g_k)}(\chi,\varphi)
$$
for every non-negative integer $j$. We then find that there exists a positive constant $C$ such that
\begin{equation}\label{continuityE1}
\Vert \chi-\varphi\Vert_\ast\leqslant C\big(\Vert \chi-\varphi\Vert+d_{(g_k)}(\chi,\varphi)\big)
\end{equation}
Recall that the group $G$ is enumerated as $G=\{h_n\,|\,n\geqslant 0\}$. Let us fix a positive real number $\varepsilon$. Then there exists a positive integer $N_\varepsilon$ such that
\begin{align}\label{continuityE2}
\notag\Vert \chi-\varphi\Vert^2 =\sum_{g\in G}^{}\vert \chi(g)-\varphi(g)\vert^2 w(g)&=\sum_{n=1}^{+\infty} \vert \chi(h_n)-\varphi(h_n)\vert^2 w(h_n)\\
&\leqslant \sum_{n=1}^{N_\varepsilon} \vert \chi(h_n)-\varphi(h_n)\vert^2 w(h_n)+\varepsilon
\end{align}
Let $n$ be a positive integer less or equal than $N_\varepsilon$. Since $w$ is a probability measure on $G$ we have $w(h_n)\leqslant 1$. Moreover we know that $(g_k)_{k\geqslant 0}$ is a generate sequence of the group $G$. Hence one can find integers $\ell_n\geqslant 0$ and $\big(\alpha_0^{(n)},\dots, \alpha_{\ell_n}^{(n)}\big)\in \mathbb{Z}^{\ell_n+1}$ such that 
$$
h_n=\sum_{0\leqslant k\leqslant \ell_n}^{}\alpha_k^{(n)}g_k
$$ 
and then
\begin{align}\label{continuityE3}
\notag\vert \chi(h_n)-\varphi(h_n)\vert &\leqslant \Bigg\vert \prod_{k=0}^{\ell_n}\chi(g_k)^{\alpha_k^{(n)}}-\prod_{k=0}^{\ell_n}\varphi(g_k)^{\alpha_k^{(n)}} \Bigg\vert\\
\notag &\leqslant \sum_{k=0}^{\ell_n}\big\vert \alpha_k^{(n)}\big\vert\, \big\vert\chi(g_k)-\varphi(g_k)\big\vert\\
&\leqslant \Bigg(\sum_{k=0}^{\ell_n}\big\vert \alpha_k^{(n)}\big\vert\Bigg)d_{(g_k)}(\chi,\varphi)
\end{align}
It then follows from \eqref{continuityE1}, \eqref{continuityE2} and \eqref{continuityE3} that for every $\varepsilon >0$, there exists a positive constant $C_\varepsilon$ such that 
$$
\Vert \chi-\varphi\Vert_\ast\leqslant C_\varepsilon d_{(g_k)}(\chi,\varphi)+\varepsilon
$$
and Lemma \ref{continuity} is proved.
\end{proof}
According to Lemma \ref{continuity} and the separability of $(K,d_{(g_k)})$, the space $X_\ast$ is separable and the representation $(X_\ast,\rho_{X_\ast})$ remains partially bounded with respect to the sequence $(g_k)_{k\geqslant 0}$ and has an uncountable unimodular point spectrum (since it contains the uncountable set $K$). This prove that $(1)$ implies $(2)$. The proof of Theorem \ref{characterization} is now complete.
\end{proof}

\nocite{*}
\bibliography{biblio1}
\bibliographystyle{plain}

\noindent \textit{Vincent Devinck}\newline
\noindent \textit{E-mail address}: \texttt{vincedevinck@gmail.com}

\end{document}